      \theoremstyle{plain}
      \newtheorem{assumption}{Assumption}
\definecolor{labelkey}{rgb}{0.0, 0.8, 0.3}
\renewcommand{\le}{\leqslant}
\renewcommand{\ge}{\geqslant}
\renewcommand{\phi}{\varphi}
\newcommand{\p}{\mathbb{P}}
\newcommand{\KL}{\mathsf{KL}}
\newcommand{\email}[1]{(\href{mailto:#1}{\rm{#1}})}
\begin{document}

\begin{frontmatter}

	\title{On the sample complexity of entropic optimal transport}
	\runtitle{Sample complexity of entropic OT}
 
	\author{ 
		\fnms{Philippe} \snm{Rigollet}\ead[label=rigollet]{rigollet@math.mit.edu}
			~and~
		\fnms{Austin J.} \snm{Stromme}\ead[label=ajs]{astromme@mit.edu}
	}
	\address{{Philippe Rigollet}\\
		{Department of Mathematics} \\
		{Massachusetts Institute of Technology}\\
		{77 Massachusetts Avenue,}\\
		{Cambridge, MA 02139-4307, USA}\\
		{\email{rigollet@math.mit.edu}}
	}
	\address{{Austin J. Stromme}\\
		{Department of EECS} \\
		{Massachusetts Institute of Technology}\\
		{77 Massachusetts Avenue,}\\
		{Cambridge, MA 02139-4307, USA}\\
		{\email{astromme@mit.edu}}
	}
	\affiliation{Massachusetts Institute of Technology}

\runauthor{Rigollet and Stromme}

\begin{abstract}
We study the sample complexity of entropic optimal
transport
in high dimensions using computationally
efficient plug-in estimators. We significantly advance the state
of the art by establishing dimension-free, parametric rates
for estimating various quantities of interest, including  the entropic regression function which is a natural analog to the optimal transport map. As an application, we propose a practical model
for transfer learning based
on entropic optimal transport and establish parametric rates of convergence for  nonparametric  regression and classification.
\end{abstract}

\end{frontmatter}

\section{Introduction}

Thanks to remarkable computational advances~\cite{PeyCut19}, optimal transport (OT) has recently emerged as an effective tool to tackle a wide range of statistical problems that were out of reach of previous methods.  Given two measures $\mu$ and $\nu$ on $\R^d$ and a cost function $c:\R^d \times \R^d \to [0,\infty)$,  the OT problem~\cite{Vil03,Vil09} is the infinite dimensional linear optimization problem given by
\begin{equation}
    \label{eq:OTmain}
    \inf_{\pi \in \Pi(\mu,\nu)} 
\int c(x,y)\ud \pi (x,y)\,,
\end{equation}
where the infimum is taken over the set $\Pi(\mu, \nu)$ of couplings  between $\mu$ and $\nu$. Recall that $\pi \in \Pi(\mu, \nu)$ is a valid coupling between $\mu$ and $\nu$ if $\pi$ is a probability measure on $\R^d\times \R^d,$ such that for any measurable $A \subset \R^d$, it holds that $\pi(A \times \R^d)=\mu(A)$ and $\pi(\R^d \times A)=\nu(A)$. Throughout this paper, we assume that $\mu$ and $\nu$ have bounded support. Moreover, for concreteness we set $c(x,y)=\|x-y\|^2$  as in the majority of applications of OT, but  our results readily extend to any cost $c$ that is uniformly bounded on the support of $\mu$ and $\nu$. In that case,~\eqref{eq:OTmain} admits a unique minimizer $\pi_0$, called the \emph{OT coupling}. Therefore, OT provides a principled way of selecting a non-trivial coupling between probability measures. 
Furthermore, Brenier's theorem  states that under mild regularity conditions on $\mu$, the OT coupling $\pi_0$ is supported on the graph of a deterministic map $T$ called the \emph{Brenier map}. In other words, $(X,Y) \sim \pi_0$ if and only if $X \sim \mu$ and $Y=T(X)\sim \nu$. 

The OT coupling has the following dynamical interpretation in terms of energy minimization. Consider the position $X_t$ at time $t$ of a particle in $\R^d$ evolving according to a time-varying velocity field $v_t: \R^d \to \R^d$ so that $\ud X_t=v_t(X_t) \ud t$. It turns out that the velocity field $\{v_t\}$ that transports a population of particles from $X_0\sim \mu$ to $X_1 \sim \nu$ while minimizing the (kinetic) energy functional  $\int \E\|v_t(X_t)\|^2 \ud t$ is given simply in terms of the Brenier map: it is the velocity field that transports $x$ to $T(x)$ at constant speed along a straight line. Moreover, the minimum energy value is given by the squared Wasserstein distance which is also the value of the minimum in~\eqref{eq:OTmain}. This is the Benamou-Brenier formula~\cite[Prop. 5.30]{San15}. This argument extends to the case where the Brenier map may not exist and it shows that the OT coupling minimizes the energy needed to evolve a population of particles from an initial to a final distribution.
This observation has fueled a conceptual shift from the traditional statistical toolbox in cases where the energy minimization perspective is justified. Indeed, the estimation of couplings between datasets has led to spectacular developments on fundamental questions in many areas including statistics~\cite{RigWee18,rigollet2019uncoupled}, economics~\cite{TorGunRig21}, computer graphics~\cite{SolGoePey15}, computational biology~\cite{schiebinger2019optimal, LavZhaKim21}, and machine learning~\cite{ot_for_domain_adaptation}. 

A central application of optimal transport is \emph{transfer learning}, where the goal is to transfer information from one dataset to another using the Brenier map. For example, in~\cite{ForHutNit19}, an estimated transport map is constructed from a labeled source dataset to an unlabeled target dataset in order to perform classification on the target dataset despite an absence of labels. This question has received a surge of interest in the context of image classification under the name \emph{domain adaptation}~\cite{ot_for_domain_adaptation}. Here, the goal is to automatically adapt image classification under shifting image conditions such as lighting. For this class of problems, optimal transport provides a natural candidate to transfer the points using the Brenier map due to its minimum energy property.

Unfortunately, a line of recent
work has provided strong evidence that the OT coupling
suffers from a statistical curse of dimensionality.
Indeed, without further assumptions, the minimax rate for estimating the  OT cost
is at least $n^{-1/d}$~\cite{niles2019estimation}, and a similar rate is conjectured to 
hold for the problem of estimating the Brenier map $T$~\cite{HutRig21}.
While recent theoretical effort has been devoted to showing
that this inefficiency can be alleviated  by making structural assumptions---chiefly smoothness---on the transport map, finding computationally efficient and smoothness-adaptive estimators
is a challenging and ongoing research topic~\cite{ForHutNit19,HutRig21,ManBalNil21,manole2021sharp,pooladian2021entropic,VacMuzRud21,muzellec2021near,deb2021rates}.

In this work, we study an alternative to the OT coupling that we call the
\emph{Schr\"odinger coupling}. It arises as the solution to the entropically regularized OT problem given by,
for $\eta > 0$,
\begin{equation}
    \label{eq:SCHmain}
 \inf_{\pi \in \Pi(\mu,\nu)} \Big\{ 
\int \|x-y\|^2\ud \pi (x,y) + \frac{1}{\eta} \KL(\pi \| \mu \otimes \nu)\Big\}\,,
\end{equation}
where $\KL$ denotes the Kullback-Leibler divergence.
This regularized problem dates back to
early work of Schr\"odinger~\cite{Sch31,schrodinger1932theorie},
and recently has largely eclipsed the OT coupling
in applications because it offers significant computational advantages. Indeed, it can be computed extremely quickly using the Sinkhorn algorithm~\cite{sinkhorn1964relationship, Cut13, altschuler2017near, PeyCut19}. 
Like the OT coupling, the Schr\"odinger coupling also arises
from a minimum energy paradigm. However, in the
Schr\"odinger coupling, the particles evolve according to a {\it stochastic} differential equation $\ud X_t=v_t(X_t) \ud t + (2\eta)^{-1/2}\ud W_t$, where $\{W_t\}$ is a standard Brownian motion over $\R^d$. In that case, it can be shown using the Girsanov formula that the velocity field that minimizes the energy functional $\int \E\|v_t(X_t)\|^2 \ud t$ subject to $X_0 \sim \mu$ and $X_1 \sim \nu$ induces the Schr\"odinger coupling; see, e.g., \cite{leonard2013survey}.

Most previous works have studied the Schr\"odinger coupling
in the asymptotic regime $\eta \to \infty$ as a surrogate for the original OT problem~\cite{erbar2015large,pal2019difference,nutz2021entropic,bernton2021entropic,altschuler2022asymptotics, delalande2022nearly}.
In light of the minimum energy interpretation above, we argue that the Schr\"odinger coupling is a quantity of interest on its own. As a result, we treat instead
 $\eta$ as a fixed parameter.
Recall that we assume that the probability measures $\mu$ and $\nu$
have bounded support. In this case,
the Schr\"odinger coupling exists and is unique, and we denote it by
\begin{equation}\label{eqn:pi_star_defn}
\pi_\star := \argmin_{\pi \in \Pi(\mu, \nu)} \Big\{ \int \|x - y\|^2 \ud \pi(x, y)
+ \frac{1}{\eta} \KL(\pi \| \mu \otimes \nu) \Big\}\, ,
\end{equation}
This paper focuses on the estimation of the Schr\"odinger coupling $\pi_\star$ and quantities that are derived from it.

We work in a standard statistical setting. Assume
$X_1, \ldots, X_n \sim \mu$ i.i.d., 
and $Y_1, \ldots, Y_n \sim \nu$ i.i.d.
We denote the sample from $\mu$ by $\X := (X_1, \ldots, X_n)$
and the sample from $\nu$ by $\Y := (Y_1, \ldots, Y_n)$. We assume further that the samples $\X$ and $\Y$ are mutually independent. 
% They can be mutually independent or deterministically coupled, $Y_i=f(X_i)$ for some $f$, or anything in-between; their coupling does not appear in our analysis.
The associated empirical measures are denoted $\mu_n$ and $\nu_n$ respectively. Recall that they are given by
$$
\mu_n := \frac{1}{n} \sum_{i = 1}^n \delta_{X_i} \quad \quad 
\nu_n:=\frac{1}{n} \sum_{j = 1}^n \delta_{Y_j} \,.
$$
Given samples $\X$ and $\Y$, the empirical Schr\"odinger coupling is written
 \begin{equation}
     \label{eqn:pi_n_defn}
     \pi_n := \argmin_{\pi \in \Pi(\mu_n, \nu_n)}
  \Big\{ 
 \int \|x-y\|^2\ud \pi (x,y) + \frac{1}{\eta}\KL(\pi \| \mu_n \otimes \nu_n)\Big\}\,.
 \end{equation}

In addition to the computational and conceptual practicalities
of Schr\"odinger couplings, recent work has
shown that they may also avoid the statistical curse of dimensionality endemic
to unregularized OT. 
Indeed, Genevay et. al~\cite{genevay2019sample} (see also~\cite{mena2019statistical})
established the remarkable result
that the cost of $\pi_n$ in~\eqref{eqn:pi_n_defn}
converges to that of $\pi_\star$ in~\eqref{eqn:pi_star_defn}
with the dimension-free statistical rate $1/\sqrt{n}$, in stark contrast
to the unregularized analog which has statistical rate $n^{-1/d}$~\cite{niles2019estimation}. Implicit to these works is the parametric convergence of various quantities, including empirical dual potentials that converge to their population counterparts at a $1/\sqrt{n}$ rate~\cite{mena2019statistical,luise2019sinkhorn, BarGonLou22}.

In this work, we study the sample
complexity of entropic optimal transport for
initial tasks such as cost and density estimation as
well as downstream applications like regression.
Our first set of results improves upon the previous
works for entropic OT cost estimation, and also establishes
rates for the
map as well as the density of $\pi_\star$ with respect to $\mu \otimes \nu$. 
The proofs proceed via an elementary and direct analysis of the empirical
dual problem. From this, we study the stochastic process $(\pi_n-\pi_\star)(\phi)$, indexed by bounded functions $\phi$
and apply these ideas to yield fast $1/n$ rates
for a family
of transfer learning problems. 
A major feature of our proofs is their simplicity; we obtain
all of our results without resorting to technical empirical process
arguments, in contrast to most of the previous statistical literature on both regularized and unregularized OT.

Central limit theorems for the convergence of $\pi_n$ to
$\pi_\star$ have been studied by~\cite{KlaTamMun20} in the discrete support case and partially in~\cite{gunsilius2021matching} for more general cases; see also~\cite{ghosal2021stability} for general consistency results.
Quantitative versions of this result, with
differences measured in $W_1$, appeared in both~\cite{eckstein2021quantitative, deligiannidis2021quantitative}.
The latter quantitative results are similar to our own
but because they measure stability in 
the $W_1$ distance, they give rates which suffer from the
curse of dimensionality.
Our proofs, however, are different and notably simpler. Moreover, they do not require  any smoothness assumptions on the cost function.
This paper is organized as follows. We give an introduction to entropic optimal transport in section~\ref{subsec:entropic_ot_prelim} and define the main quantities of interest. Our main results, namely parametric rates of convergence for the estimation of these quantities are presented in  section~\ref{sec:main_results} together with applications to transfer learning. Most proofs are postponed to section~\ref{sec:sample_complexity}, except for some technical proofs, including those of high-probability bounds, which are relegated to the appendix.

\medskip 
\noindent {\sc Notation.} For an integer $m$, we set $[m] := \{1, \ldots, m\}$,
and let $\Sigma_m$ denote the set of permutations on $[m]$.
A norm $\|\cdot\|$ without subscript denotes the standard
Euclidean distance. 
Given a probability measure $\beta$ on $\R^m$, 
and a $\beta$-integrable function $f \colon \R^m \to \R^k$,
we often abbreviate $\beta(f) := \int f \ud \beta$ or even  $\E[f(X)]$ when $\beta$ is clear from the context. When $\beta$ and $f$ are defined on a product space, we may also write $\beta(f(x,y))$ for $\int f(x,y) \ud \beta(x,y)$. 
Moreover, for any $q \ge 1$, we put
$$
\|f\|_{L^q(\beta)} := \Big(\int \|f(x)\|^q \ud \beta(x) \Big)^{1/q}\, .
$$ The space $L^2(\beta)$ is the Hilbert space (modulo equivalence under
this norm) of $f$
for which the above norm is finite
and with
inner product written $\langle \cdot, \cdot\rangle_{L^2(\beta)}$.
By $\|f\|_{L^{\infty}(\beta)}$ we mean the essential supremum 
$\esup\|f(X)\|$ for $X \sim \beta$.
%\as{do we use $L^{\infty}$?} \ndpr{yes!}%and we write $\|f\|_{L^{\infty}}=\sup_x \|f(x)\|$.
We denote by $L^\infty(\beta)$ the space of functions $f$ such that  $\|f\|_{L^{\infty}(\beta)}<\infty$.

Given two vectors $a, b \in \R^d$,
we denote their concatenation $(a, b) \in \R^{2d}$,
which is the vector with first $d$ components equal to $a$
and last $d$ components equal to $b$.
Given probability distributions $\beta_0$ on $\R^{m_0}$ and
$\beta_1$ on $\R^{m_1}$, we denote the product
$\beta_0 \otimes \beta_1$, which is defined, for each Borel set
$B \subset \R^{m_0} \times \R^{m_1}$, as
$$
(\beta_0 \otimes \beta_1)(B) := \int \beta_0(B^y) \ud \beta_1(y)
 = \int \beta_1(B_x) \ud \beta_0(x)\, ,
$$ where $B^y := \{x \in \R^{m_0} \colon (x, y) \in B\}$
and $B_x :=\{y \in \R^{m_1} \colon (x, y) \in B\}$.
We denote $k$-fold products as $\beta^{\otimes k}$.

For $\theta \in \R^m$, let $\supp(\theta) \subset [m]$
denote the coordinates on which $\theta$ is non-zero.
For each $\theta \in \R^m$ and $S \subset [m]$,
let $\theta_S := (\theta_k \mathbbold{1}[k \in S])_{k = 1}^m$,
namely the vector with coordinates not in $S$ set to $0$.
A continuously differentiable function $\rho \colon \cH \to \R$, where $\cH$ is a Hilbert space equipped with norm $\|\cdot\|$
is said to be $\alpha$-strongly convex with respect to
the norm $\|\cdot\|$ on a convex
set $C \subset \cH$ if, for all $u, v \in C$,
$$
\rho(v) \geqslant \rho(u) + \langle \nabla \rho(u), v - u \rangle + \frac{\alpha}{2}\|u - v\|^2 \, .
$$ It is said to be $\alpha$-strongly
concave on $C$ if $-\rho$ is $\alpha$-strongly convex on $C$.  

We frequently use the notation $A \lesssim B$ to
write the inequality $A \leqslant c B$ where
$c = c(\eta)$ is a constant depending only on $\eta$.
These suppressed constants
are typically exponentially large in $\eta$.
While the exact dependence of our results in $\eta$ can be easily extracted from our proofs, we have made no attempt to optimize it since we think of $\eta$ as a constant of order 1 throughout. We leave to future work the interesting direction
of improving the dependence of our statistical
results on $\eta$.

\section{Preliminaries on entropic optimal transport}\label{sec:preliminaries_ent_OT}

As stated in the introduction, we assume throughout  that $\mu$ and $\nu$ have bounded support. By re-scaling 
the entropic OT objective, we make the following assumption.
\begin{assumption}\label{assump:bded_support} Assume for $\mu$-almost every $x$ and
$\nu$-almost every $y$,
$\|x\| \leqslant 1/2$ and $\|y\|\leqslant 1/2$.
\end{assumption}

Our main results pertain to specific quantities arising in optimal transport. While these are natural and straightforward, their introduction requires a bit of additional notation. In this section, we introduce our main quantities of interest: dual potentials, cost function, density, and a map known as the entropic regression function.  Empirical counterparts for these objects are functions defined only data points and this section ends with the description of a canonical way to extend them over the entire space $\R^d$ (or $\R^d \times \R^d$).

We begin in section~\ref{subsec:entropic_ot_prelim} by providing background and notation for the dual entropic optimal transport problem that forms the foundation of our results. Thus equipped, we introduce in section~\ref{subsec:quantities} the main quantities of interest, as well as their empirical counterparts, which serve as a basis for our estimators. 
In section~\ref{subsec:canonical_extension} we introduce the
aforementioned canonical extensions.

\subsection{Duality theory for entropic optimal transport}
\label{subsec:entropic_ot_prelim}
The following results, as well as a discussion of the literature on duality
for the entropic optimal transport problem, can be found in~\cite{di2020optimal}.

\begin{theorem}[Strong duality]\label{thm:entropic_ot_main}
Let $P$ and $Q$ be distributions on $\R^d$ with bounded support and fix $\eta >0$.
Denote the entropic optimal transport problem
\begin{equation}\label{eqn:entropic_OT_primal}
S(P, Q) := \inf_{\pi \in \Pi(P,Q)}\big\{ \pi(\|x - y\|^2) + \frac{1}{\eta}\KL(\pi\| P \otimes Q)\big\}\,.
\end{equation} The infimum is attained by a unique $\pi_\star \in \Pi(P, Q)$ and strong duality holds in the sense that
\begin{equation}\label{eqn:entropic_OT_strong_duality}
S(P, Q)
= \sup_{(f, g) \in L^{\infty}(P) \times L^{\infty}(Q)} \Big\{ P(f) + Q(g) - \frac{1}{\eta}(P \otimes Q)
(e^{-\eta\|x - y\|^2 + \eta f(x) + \eta g(y)} - 1) \Big\}\,.
\end{equation} The supremum above is attained at a pair $(f_0, g_0) \in L^{\infty}(P) \times L^{\infty}(Q)$
of dual potentials, which are unique up to the translation
$(f_0, g_0) \mapsto (f_0 + c, g_0 - c)$ for $c \in \R $.

Moreover, primal and dual solutions are linked via the following relationships. For any pair $(f, g) \in L^{\infty}(P) \times L^{\infty}(Q)$,
let $\pi$ be the measure with density
\begin{equation}\label{eqn:PQ_scaling}
\frac{\ud\pi}{\ud(P \otimes Q)}(x, y) = e^{-\eta\|x -y \|^2 + \eta f(x) + \eta g(y)}\,.
\end{equation}
Then the pair $(f, g)$ is optimal for~\eqref{eqn:entropic_OT_strong_duality} if and only if
 $\pi\in \Pi(P, Q)$ and $\pi$ is optimal for~\eqref{eqn:entropic_OT_strong_duality}. 
\end{theorem}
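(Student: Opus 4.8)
The plan is to establish strong duality by the standard route: first observe that both sides of~\eqref{eqn:entropic_OT_strong_duality} are convex/concave optimization problems, write down the Lagrangian associated to the marginal constraints, and then use a minimax argument to swap the order of the sup and the inf. Concretely, starting from the primal~\eqref{eqn:entropic_OT_primal}, one writes the marginal constraints $\pi(\cdot \times \R^d) = P$ and $\pi(\R^d \times \cdot) = Q$ via Lagrange multipliers $f \in L^\infty(P)$ and $g \in L^\infty(Q)$, which yields the Lagrangian
$$
L(\pi, f, g) = \pi(\|x-y\|^2) + \tfrac{1}{\eta}\KL(\pi \| P\otimes Q) + P(f) + Q(g) - \pi(f(x) + g(y))\,,
$$
now minimized over \emph{all} nonnegative measures $\pi$ (not just couplings). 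For fixed $(f,g)$, the inner minimization over $\pi$ is an unconstrained entropy-regularized problem whose minimizer is exactly the Gibbs-type measure in~\eqref{eqn:PQ_scaling}, and substituting it back produces precisely the dual functional on the right-hand side of~\eqref{eqn:entropic_OT_strong_duality}. The identity $\inf_\pi \sup_{f,g} L = \sup_{f,g}\inf_\pi L$ then gives strong duality.

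The steps I would carry out, in order: (i) verify that $S(P,Q)$ is finite and that the primal infimum is attained at a unique $\pi_\star$ — finiteness follows from plugging in $\pi = P\otimes Q$ (so $\KL = 0$ and the cost is bounded by $1$ under Assumption~\ref{assump:bded_support}), attainment from lower semicontinuity of the objective in the weak topology together with tightness of $\Pi(P,Q)$, and uniqueness from strict convexity of $\KL$; (ii) solve the inner problem $\inf_{\pi \ge 0} L(\pi,f,g)$ explicitly by a pointwise/calculus-of-variations argument, obtaining the optimal density $e^{-\eta\|x-y\|^2 + \eta f(x) + \eta g(y)}$ and the dual objective after substitution; (iii) justify the minimax swap; (iv) read off the complementary-slackness / optimality conditions linking primal and dual optima, which gives the "if and only if" in the last paragraph: a feasible dual pair $(f,g)$ is optimal iff the associated $\pi$ in~\eqref{eqn:PQ_scaling} actually lies in $\Pi(P,Q)$, i.e.\ iff the Lagrangian stationarity conditions in $f$ and $g$ (which are exactly the marginal constraints) hold; (v) derive uniqueness of the potentials up to the additive constant from strict concavity of the dual functional modulo the translation $(f,g)\mapsto(f+c,g-c)$, which is its only invariance.

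For the minimax swap in step (iii), the cleanest approach is probably \emph{not} to invoke an abstract Sion-type theorem (the dual variables live in an infinite-dimensional space and the requisite compactness is delicate), but rather the elementary "weak duality plus a feasible dual witness" argument: weak duality $S(P,Q) \ge \sup_{f,g}\{\dots\}$ is immediate since for any coupling $\pi$ and any $(f,g)$ one has $\pi(f(x)+g(y)) = P(f)+Q(g)$, and then $\KL(\pi\|P\otimes Q) \ge \tfrac{1}{\eta}\cdot\eta\,\pi(-\|x-y\|^2 + f + g) - \tfrac{1}{\eta}(P\otimes Q)(e^{-\eta\|x-y\|^2+\eta f+\eta g}-1)$ by the Fenchel–Young / Donsker–Varadhan inequality; for the reverse, one exhibits an optimal dual pair by a direct argument (e.g.\ alternating maximization / Sinkhorn fixed point, or compactness of the potentials after proper normalization using the boundedness of the cost) and checks it achieves $S(P,Q)$ via the same inequality made an equality.

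The main obstacle I anticipate is step (iii) together with the \emph{existence} of the dual optimizers $(f_0,g_0)$ in $L^\infty$: one must show the supremum is attained and that the optimizers are genuinely bounded functions, not merely measurable. This requires an a priori estimate — typically, given any near-optimal sequence, one normalizes by the translation invariance and then uses the Schrödinger/Sinkhorn fixed-point equations $e^{-\eta f_0(x)} = \int e^{-\eta\|x-y\|^2 + \eta g_0(y)}\ud Q(y)$ (and symmetrically) to propagate the boundedness of the cost $\|x-y\|^2 \le 1$ into two-sided bounds on $f_0$ and $g_0$. This is exactly where Assumption~\ref{assump:bded_support} is essential. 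Since the statement attributes these facts to~\cite{di2020optimal}, in the paper itself one may simply cite that reference; a self-contained proof would devote most of its effort to this boundedness/attainment point, with the remaining algebra (the Gibbs-measure computation and the optimality conditions) being routine.
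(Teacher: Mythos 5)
The paper does not prove Theorem~\ref{thm:entropic_ot_main}; it simply attributes it to~\cite{di2020optimal} (``The following results, as well as a discussion of the literature on duality for the entropic optimal transport problem, can be found in~\cite{di2020optimal}''), so there is no in-paper proof to compare against. You correctly anticipate this in your closing paragraph. Your sketch of a self-contained argument is sound and identifies the right pressure points: the inner minimization in the Lagrangian over nonnegative measures yields the Gibbs density~\eqref{eqn:PQ_scaling}; weak duality follows from the Fenchel--Young (Donsker--Varadhan) inequality for relative entropy combined with $\pi(f+g)=P(f)+Q(g)$ for $\pi\in\Pi(P,Q)$; and the genuine work is showing the dual supremum is attained by bounded potentials, for which the Schr\"odinger fixed-point equations together with the bounded-cost assumption give the needed a priori $L^\infty$ estimates (this is exactly the content of Proposition~\ref{prop:bded_dual} in the paper, proved there following~\cite{mena2019statistical}). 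One small slip in your displayed inequality: taking $h=\eta(-\|x-y\|^2+f+g)$, Fenchel--Young gives $\KL(\pi\|P\otimes Q)\ge \pi(h)-(P\otimes Q)(e^h-1)$, so after dividing by $\eta$ the left-hand side should read $\tfrac{1}{\eta}\KL(\pi\|P\otimes Q)$ rather than $\KL(\pi\|P\otimes Q)$; as written the inequality is off by a factor of $\eta$, though the corrected form reproduces the weak duality bound exactly and the error is inconsequential to your argument.
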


We denote the population dual objective by
\begin{equation}\label{eqn:pop_dual_objective}
\Phi(f, g) := \mu(f) + \nu(g) - \frac{1}{\eta} (\mu \otimes \nu)\big( e^{-\eta(\|x - y\|^2 -
f(x) - g(y))} \big) + \frac{1}{\eta}\,.
\end{equation}
To account for translation invariance of dual potentials, we distinguish the unique optimal dual potentials $(f_\star, g_\star)$ such that
$\nu(g_\star) = 0$. 

Throughout our proofs, we make extensive use of another optimal pair of dual potentials denoted $(\bar f_\star, \bar g_\star)$ defined by $\bar f_\star=f_\star+ \nu_n (f_\star)$ and $\bar g_\star=g_\star - \nu_n(g_\star)$ so that $\nu_n(\bar g_\star)=0$.

The empirical dual objective $\Phi_n$ is defined by
\begin{equation}\label{eqn:emp_dual_objective}
\Phi_n(f, g) := \mu_n(f) + \nu_n(g) - \frac{1}{\eta} (\mu_n \otimes \nu_n)(e^{-\eta(\|x - y\|^2 - f(x)
- g(y))}) + \frac{1}{\eta}\,.
\end{equation}
As in the population case, we distinguish the unique
optimizers $(f_n, g_n)$ such that $\nu_n(g_n) = 0$.

In the sequel, we will be using the gradient of $\Phi_n$ that are defined as elements of (the dual of) $L^2(\mu_n) \times L^2(\nu_n)$ and given by 
\begin{align}
  \langle \nabla \Phi_n (f,g), (\varphi,\psi)\rangle_{L^2(\mu_n)\times L^2(\nu_n)}&=(\mu_n\otimes \nu_n)\big((\varphi(x)+\psi(y)) (1-e^{-\eta(\|x - y\|^2 -
f(x) - g(y))})  \big)\,.   \label{eq:gradphin}
\end{align}
In particular, we readily get the following expression for the norm of the above gradient:
\begin{equation}
    \label{eq:norm_gradphin}
    \|\nabla \Phi_n (f,g)\|_{L^2(\mu_n) \times L^2(\nu_n)}^2 =  \frac1n\sum_{i=1}^n \left(1-\frac1n\sum_{j=1}^n p(X_i,Y_j)\right)^2+ \frac1n\sum_{j=1}^n \left(1-\frac1n\sum_{i=1}^n p(X_i,Y_j)\right)^2\,,
\end{equation}
where $p$ is the density defined in~\eqref{eqn:PQ_scaling} and is given by $p(x,y)=e^{-\eta\|x-y\|^2+\eta f(x) + \eta g(y)}.$

\begin{remark}
Throughout this paper we make extensive use of the fact that $\Phi, \Phi_n,$ and $\nabla \Phi_n$ enjoy the following translation invariance property: For any constant $c \in \R$, it holds
$$
\Phi(f,g)=\Phi(f+c, g-c)\,, \quad \Phi_n(f,g)=\Phi_n(f+c,g-c)\,, \quad \text{and} \quad \nabla \Phi_n(f,g)=\nabla \Phi_n(f+c,g-c)\,.
$$
\end{remark}
\subsection{Quantities of interest}
\label{subsec:quantities}

Beyond the couplings $\pi_\star$ and $\pi_n$ that have already been defined, we now introduce important quantities associated to entropic optimal transport: cost, density, and map.

\paragraph*{Cost.} Using~\eqref{eqn:entropic_OT_primal}, define the population entropic OT cost as $S:=S(\mu, \nu)$ and its plug-in estimator $S_n:=S(\mu_n, \nu_n)$.

\paragraph*{Density.} Define the density of the optimal coupling $\pi_\star$ with respect to the product measure $\mu \otimes \nu$ as
\begin{equation}\label{eqn:pop_scaling}
p_\star(x, y) := \frac{\ud\pi_\star}{\ud(\mu \otimes \nu)}(x,y) = e^{-\eta(\|x - y\|^2 - f_\star(x) - g_\star(y))}\,.
\end{equation} The statement $\pi_\star \in \Pi(\mu, \nu)$ can then
be written in the following succinct manner.
For $\mu$-almost every $x$ and $\nu$-almost every $y$,
\begin{equation}\label{eqn:pop_marg}
\nu(p_\star(x,\cdot))= \mu(p_\star(\cdot, y))=1\,.
\end{equation}
Equivalently,
\begin{align}
    f_\star(x)&=-\frac1\eta \ln \Big(\int e^{-\eta\|x-y\|^2 + \eta g_\star(y)} \ud \nu(y)\Big) \label{eqn:fstar_from_gstar}\\
     g_\star(y)&=-\frac1\eta \ln\Big(\int e^{-\eta\|x-y\|^2 + \eta f_\star(x)} \ud \mu(x)\Big)\label{eqn:gstar_from_fstar}
\end{align}

Its empirical counterpart is the density of $\pi_n$ with respect to the product measure $\mu_n \otimes \nu_n$. It is defined for all $x \in \X$ and
$y \in \Y$, as 
\begin{equation}\label{eqn:emp_decomp}
p_n(x, y) := \frac{\ud\pi_n}{\ud(\mu_n \otimes \nu_n)}(x,y) 
= e^{-\eta(\|x - y\|^2 - f_n(x) - g_n(y))}.
\end{equation}
The marginal constraints then become, for all $x \in \X$ and $y \in \Y$,
the equalities
\begin{equation}\label{eqn:emp_marg}
\nu_n(p_n(x,\cdot))= \mu_n(p_n(\cdot, y))=1\,.
\end{equation}

\paragraph*{Map.}
Finally, we consider the entropic
analog of the optimal transport map, namely the \emph{entropic regression function}---sometimes called ``barycentric projection"---defined as the map
\begin{equation}\label{eqn:cond_pistar}
b_\star(x) := \E_{\pi_\star}[Y\, | \, X=x]\,.
\end{equation}
Its empirical counterpart is the plug-in estimator
\begin{equation}\label{eqn:cond_pin}
b_n(x) := \E_{\pi_n}[Y \, | \, X=x]\,.
\end{equation} The estimator $b_n$ was recently proposed in~\cite{pooladian2021entropic} as a computationally efficient surrogate for a smooth Brenier map, and rates of estimation were established, albeit suboptimal ones. 

\subsection{Canonical extensions}\label{subsec:canonical_extension}
The marginal constraints in~\eqref{eqn:emp_marg}
induce a canonical extension of the optimal dual potentials
$(f_n, g_n)$ to maps defined
on all of $\R^d$~\cite{berman2020sinkhorn,pooladian2021entropic}.
For example, the marginal constraint for $x \in \X$ implies
\begin{equation}\label{eqn:fn_canonical}
e^{-\eta f_n(x)} = \frac{1}{n} \sum_{j = 1}^n e^{-\eta\|x - Y_j\|^2 + \eta g_n(Y_j)}.
\end{equation} Since the right-hand side makes sense for any $x \in \R^d$,
we thus abuse notation and define $f_n(x)$, for any $x \in \R^d$,
to satisfy this equation. We similarly define $g_n(y)$,
for any $y \in \R^d$, to satisfy
\begin{equation}\label{eqn:gn_canonical}
e^{-\eta g_n(y)} = \frac{1}{n} \sum_{i = 1}^n e^{-\eta\|X_i - y\|^2 + \eta f_n(X_i)}.
\end{equation} 
In turn, this canonical extension also applies to the density   $p_n$ in a straightforward manner as well as to the entropic regression function estimator $b_n$. From now on, we employ the definition
\begin{equation}
    \label{eq:canonical_extension_bn}
    b_n(x)=\frac{\sum_{i=1}^n Y_i p_n(x,Y_i)}{\sum_{i=1}^n p_n(x,Y_i)}=\frac1n\sum_{i=1}^n Y_i p_n(x,Y_i)\,, \quad x \in \R^d\,.
\end{equation}

\section{Main results}\label{sec:main_results}

In this section, we state our main results on the rates of estimation of the quantities introduced in section~\ref{subsec:quantities}. We also present our main application to transfer learning.

\subsection{Sample complexity}\label{subsec:sampling_complexity_main}
In this section we give our main results on rates of estimation for various quantities arising from entropic optimal transport: the cost,  the density of the coupling, the entropic regression function, as well as the coupling itself.

\paragraph*{Cost.}
Our first result concerns the rate of convergence
of $S_n$ to $S$.
We emphasize that
the unregularized analog of this quantity $(\eta \to \infty)$ is generically of order
at least $n^{-2/d}$, and, in fact, it is known that no estimator
for the unregularized cost
can beat the rate $n^{-1/d}$ in general~\cite{niles2019estimation}.
For the entropic problem, existing results imply
$\E|S_n - S| \lesssim 1/\sqrt{n}$
under mild assumptions on $\mu$ and $\nu$~\cite{genevay2019sample,luise2019sinkhorn,mena2019statistical,BarGonLou22}.
Our techniques yield a commensurate bound in a stronger sense: a $1/n$ rate both for the mean squared error and the bias using a different proof. The result stating that the bias is an order of magnitude smaller than the stochastic fluctuations is quite remarkable and seems to have appeared only in the concurrent work~\cite{BarGonLou22}.
\begin{theorem}\label{thm:sample_complexity_cost}
The mean squared error and bias are bounded respectively as
$$
\E |S_n - S|^2 \lesssim \frac{1}{n}\,, \qquad
|\E [S_n] - S| \lesssim \frac{1}{n}\,.
$$
Moreover, for all $t > 0$, with probability at least $1 - 6e^{-t}$,
$$
|S_n - S| \lesssim \frac{t}{n} + \sqrt{\frac{t}{n}}\,.
$$
\end{theorem}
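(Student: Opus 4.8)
The plan is to work with the dual formulation of Theorem~\ref{thm:entropic_ot_main}: by strong duality, $S = \Phi(f_\star, g_\star) = \sup \Phi$ and $S_n = \Phi_n(f_n, g_n) = \sup \Phi_n$. The key step is to prove the identity
\[
S_n - S = \big(\Phi_n(f_\star, g_\star) - \Phi(f_\star, g_\star)\big) + R_n, \qquad 0 \le R_n \lesssim \|\nabla \Phi_n(f_\star, g_\star)\|_{L^2(\mu_n) \times L^2(\nu_n)}^2,
\]
which isolates a \emph{centered} empirical-process term, evaluated at the deterministic potentials $(f_\star, g_\star)$, from a small nonnegative remainder. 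Since $\nu(g_\star) = 0$, translation invariance lets us replace $(f_\star, g_\star)$ by its translate $(\bar f, \bar g) := (f_\star + \nu_n(g_\star),\, g_\star - \nu_n(g_\star))$, whose second component has zero $\nu_n$-mean, matching the normalization of $(f_n, g_n)$, without changing $\Phi_n$ or $\nabla \Phi_n$. Then $R_n := \Phi_n(f_n, g_n) - \Phi_n(\bar f, \bar g) \ge 0$ by optimality of $(f_n, g_n)$. For the upper bound on $R_n$, combine concavity of $\Phi_n$, which gives $R_n \le \langle \nabla \Phi_n(\bar f, \bar g),\, (f_n, g_n) - (\bar f, \bar g)\rangle$, with $\alpha$-strong concavity of $\Phi_n$ on the convex set of uniformly bounded potential pairs with $\nu_n$-mean-zero second component --- which forces $\|(f_n, g_n) - (\bar f, \bar g)\|_{L^2(\mu_n) \times L^2(\nu_n)} \le \tfrac{2}{\alpha}\|\nabla \Phi_n(\bar f, \bar g)\|$ --- and conclude by Cauchy--Schwarz. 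Two structural facts are used here: the a priori bounds $\|f_\star\|_\infty, \|g_\star\|_\infty, \|f_n\|_{L^\infty(\mu_n)}, \|g_n\|_{L^\infty(\nu_n)} \lesssim 1$, which follow from Assumption~\ref{assump:bded_support} and the Sinkhorn identities~\eqref{eqn:fstar_from_gstar}--\eqref{eqn:gstar_from_fstar}, \eqref{eqn:fn_canonical}--\eqref{eqn:gn_canonical} by a short Jensen argument (placing both potential pairs in the set above); and the quantitative strong concavity $\alpha \gtrsim 1$, discussed at the end.

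Given this decomposition, the two pieces are controlled separately. For the \emph{linear piece}, expand
\[
\Phi_n(f_\star, g_\star) - \Phi(f_\star, g_\star) = (\mu_n - \mu)(f_\star) + (\nu_n - \nu)(g_\star) - \tfrac{1}{\eta}\,(\mu_n \otimes \nu_n - \mu \otimes \nu)(p_\star),
\]
with $p_\star$ as in~\eqref{eqn:pop_scaling}. Because $\X$ and $\Y$ are mutually independent, $\E[(\mu_n \otimes \nu_n)(p_\star)] = (\mu \otimes \nu)(p_\star) = 1$ \emph{exactly} --- diagonal terms included --- so this quantity has mean exactly $0$; this is precisely why the bias will be an order smaller than the fluctuations. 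Each of the three summands is an average of uniformly bounded functions (using $\|f_\star\|_\infty, \|g_\star\|_\infty, \|p_\star\|_\infty \lesssim 1$) whose value changes by $O(1/n)$ when any one of the $2n$ sample points is perturbed, and whose variance is $\lesssim 1/n$; a Bernstein-type (bounded-differences) inequality therefore gives variance $\lesssim 1/n$ and, with probability at least $1 - 4e^{-t}$, deviation $\lesssim \sqrt{t/n} + t/n$.

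For the \emph{remainder}, the explicit formula~\eqref{eq:norm_gradphin} yields $R_n \lesssim \|\nabla \Phi_n(f_\star, g_\star)\|^2 = \tfrac1n\sum_i Z_i^2 + \tfrac1n\sum_j W_j^2$ with $Z_i = 1 - \tfrac1n\sum_j p_\star(X_i, Y_j)$ and $W_j = 1 - \tfrac1n\sum_i p_\star(X_i, Y_j)$. Each $Z_i$ is bounded, and conditionally on $X_i$ has mean $1 - \nu(p_\star(X_i,\cdot)) = 0$ by the population marginal constraint~\eqref{eqn:pop_marg}, so $\E[Z_i^2 \mid X_i] = \tfrac1n \mathrm{Var}_\nu(p_\star(X_i,\cdot)) \lesssim 1/n$, and symmetrically for $W_j$; hence $\E[R_n] \lesssim 1/n$. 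Since $R_n \ge 0$, this already gives the bias bound $0 \le \E[S_n] - S = \E[R_n] \lesssim 1/n$. Since moreover $0 \le R_n \lesssim 1$ almost surely, $\E[R_n^2] \lesssim \E[R_n] \lesssim 1/n$, which together with the variance of the linear piece gives $\E|S_n - S|^2 \lesssim 1/n$. Finally, $\tfrac1n\sum_i Z_i^2$ has mean $\lesssim 1/n$ and $O(1/n)$ bounded differences, so McDiarmid gives $R_n \lesssim 1/n + \sqrt{t/n}$ with probability $\ge 1 - 2e^{-t}$; a union bound with the linear piece yields $|S_n - S| \lesssim t/n + \sqrt{t/n}$ with probability at least $1 - 6e^{-t}$.

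The step I expect to be the main obstacle is the quantitative strong concavity of $\Phi_n$ used in the reduction: one must show $\Phi_n$ is $\alpha$-strongly concave on the relevant potential class with $\alpha$ bounded below by a constant depending only on $\eta$, uniformly in $n$ and $d$. The Hessian quadratic form of $-\Phi_n$ at $(f,g)$ in a direction $(\varphi,\psi)$ equals $\eta\,(\mu_n\otimes\nu_n)\big((\varphi(x)+\psi(y))^2\,e^{-\eta(\|x-y\|^2 - f(x) - g(y))}\big)$; on the class of bounded potentials the exponential factor is bounded below by an $\eta$-dependent constant $\epsilon \gtrsim 1$ (by Assumption~\ref{assump:bded_support}), so this is at least $\eta\epsilon\,(\mu_n\otimes\nu_n)\big((\varphi(x)+\psi(y))^2\big) = \eta\epsilon\big(\mu_n(\varphi^2) + \nu_n(\psi^2) + 2\mu_n(\varphi)\nu_n(\psi)\big)$, which collapses to $\eta\epsilon\,\|(\varphi,\psi)\|_{L^2(\mu_n)\times L^2(\nu_n)}^2$ exactly on the cross-section $\nu_n(\psi)=0$. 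Making this rigorous --- together with the a priori $L^\infty$ control of $(f_n,g_n)$ that places the relevant pairs in the class --- is where boundedness of the supports and the exponential-in-$\eta$ constants enter; everything else is elementary bounded-difference concentration, with no empirical-process machinery required.
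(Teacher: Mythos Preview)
Your proposal is correct and mirrors the paper's proof almost exactly: the same decomposition of $S_n - S$ into the centered piece $(\Phi_n - \Phi)(f_\star, g_\star)$ plus the nonnegative remainder $R_n = \Phi_n(f_n,g_n) - \Phi_n(\bar f_\star,\bar g_\star)$, the same strong-concavity reduction of $R_n$ to $\|\nabla\Phi_n(f_\star,g_\star)\|_n^2$ (the paper states this as a PL inequality, equivalent to your error-bound-plus-Cauchy--Schwarz route), and the same use of the population marginal constraints~\eqref{eqn:pop_marg} to show this gradient norm has expectation $\lesssim 1/n$. The only notable difference is in the tail step: the paper conditions on one sample and applies a Hilbert-space bounded-differences inequality to obtain the sharper intermediate bound $\|\nabla\Phi_n(f_\star,g_\star)\|_n^2 \lesssim t/n$ (rather than your $1/n + \sqrt{t/n}$ from direct McDiarmid), and handles the linear piece via a Hoeffding $U$-statistic decoupling trick instead of bounded differences --- but since the linear piece already contributes $\sqrt{t/n}$, either route suffices for the theorem as stated.
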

This result is achieved by a careful analysis of
the empirical dual
problem, and is elementary in that it
essentially only involves straightforward calculations and
standard inequalities arising from strong concavity.
In contrast to most of the
previous literature on statistical estimation problems
in both unregularized and
entropic optimal transport, we completely circumvent the control of suprema of empirical processes.

\paragraph*{Map.}
Our second result concerns the problem of estimating the entropic regression function defined in~\eqref{eqn:cond_pistar}. To that end, we employ the canonical extension $b_n$ defined in~\eqref{eq:canonical_extension_bn}.

In this work, we consider $b_n$ as an estimator not of the Brenier map but of the entropic regression function $b_\star$.  In contrast to the unregularized
case, the computationally feasible estimator $b_n$
achieves the parametric rate $1/n$ for estimating $b_\star$
in arbitrary dimension.

\begin{theorem}\label{thm:sample_complexity_map}
Let $b_\star$ and $b_n$ be as in~\eqref{eqn:cond_pistar}
and~\eqref{eqn:cond_pin}, respectively. Then
$$
\E\|b_n - b_\star\|_{L^2(\mu)}^2 \lesssim \frac{1}{n}\,.
$$
Moreover, for all $t > 0$, with probability at least
$1 - 8e^{-t}$, it holds
$$
\|b_n - b_\star\|_{L^2(\mu)}^2 \lesssim \frac{t}{n}\,.
$$
\end{theorem}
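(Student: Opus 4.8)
The plan is to reduce the estimation of $b_\star$ to the estimation of the density $p_\star$, exploiting the canonical extension formula $b_n(x) = \frac{1}{n}\sum_{i=1}^n Y_i\, p_n(x, Y_i)$ and the analogous population identity $b_\star(x) = \int y\, p_\star(x,y)\ud\nu(y)$. First I would write the difference $b_n(x) - b_\star(x)$ as a sum of two terms: a \emph{stochastic} term coming from replacing $\nu$ by $\nu_n$ in the integral $\int y\, p_\star(x,y)\ud\nu(y)$, and an \emph{approximation} term coming from replacing $p_\star$ by $p_n$. That is, schematically,
\begin{equation*}
b_n(x) - b_\star(x) = \underbrace{\tfrac1n\sum_{i=1}^n Y_i\bigl(p_n(x,Y_i) - p_\star(x,Y_i)\bigr)}_{(\mathrm{I})} \;+\; \underbrace{\Bigl(\tfrac1n\sum_{i=1}^n Y_i\,p_\star(x,Y_i) - \int y\,p_\star(x,y)\ud\nu(y)\Bigr)}_{(\mathrm{II})}\, .
\end{equation*}
Since $\|Y_i\| \le 1/2$ by Assumption~\ref{assump:bded_support} and the densities $p_n, p_\star$ are bounded above and below by constants depending only on $\eta$ (a consequence of the boundedness of the potentials, itself following from~\eqref{eqn:fstar_from_gstar}--\eqref{eqn:gstar_from_fstar} and their empirical analogs), term $(\mathrm{I})$ is controlled pointwise by $\lesssim \frac1n\sum_{i=1}^n |p_n(x,Y_i) - p_\star(x,Y_i)|$, and term $(\mathrm{II})$ is an average of i.i.d.\ bounded terms minus its conditional mean.

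For the stochastic term $(\mathrm{II})$, I would integrate in $x$ against $\mu$ and take expectations: conditionally on $\X$ and on the event that the potentials are well-behaved, $(\mathrm{II})$ is, for fixed $x$, a centered average of $n$ i.i.d.\ bounded random variables (as a function of $Y_1,\dots,Y_n$), hence has variance $\lesssim 1/n$; integrating over $x\sim\mu$ via Fubini gives $\E\|(\mathrm{II})\|_{L^2(\mu)}^2 \lesssim 1/n$. The high-probability version follows from a bounded-differences (McDiarmid) argument, since changing one $Y_i$ changes $\|(\mathrm{II})\|_{L^2(\mu)}$ by $O(1/n)$.

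The main obstacle, and the heart of the proof, is term $(\mathrm{I})$: one must show $\E\bigl[\frac1n\sum_i \|p_n(\cdot,Y_i) - p_\star(\cdot,Y_i)\|_{L^2(\mu)}^2\bigr] \lesssim 1/n$, i.e.\ a parametric rate for the density in an appropriate $L^2$ sense. I expect this to be handled by (or to parallel) the density estimation result that the paper establishes just before this theorem, which in turn rests on the strong-concavity analysis of the empirical dual objective $\Phi_n$: writing $p_n - p_\star$ in terms of $f_n - f_\star$ and $g_n - g_\star$ via~\eqref{eqn:emp_decomp} and~\eqref{eqn:pop_scaling}, using that $\exp$ is Lipschitz on the relevant bounded range, and bounding $\|f_n - \bar f_\star\|$ and $\|g_n - \bar g_\star\|$ through the strong concavity of $\Phi_n$ together with the fact that $\nabla\Phi_n(\bar f_\star, \bar g_\star)$ is small in expectation (of order $1/\sqrt n$ in norm, hence $1/n$ in squared norm, by~\eqref{eq:norm_gradphin} and the marginal identities~\eqref{eqn:pop_marg}). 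Combining the bounds on $(\mathrm{I})$ and $(\mathrm{II})$ via $\|b_n - b_\star\|_{L^2(\mu)}^2 \lesssim \|(\mathrm{I})\|_{L^2(\mu)}^2 + \|(\mathrm{II})\|_{L^2(\mu)}^2$ and taking expectations (resp.\ a union bound for the high-probability statement) yields the claim. The one subtlety to be careful about is that $p_n$ depends on all of $\Y$, so $(\mathrm{I})$ is not a clean i.i.d.\ average; this is why one routes it through the deterministic (given the data) bound on $\|p_n - p_\star\|$ rather than trying to apply a concentration inequality directly.
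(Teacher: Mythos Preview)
Your proposal is correct and follows essentially the same route as the paper: the same two-term decomposition into a stochastic piece $(\mathrm{II})$ handled by i.i.d.\ concentration (variance/bounded-differences) and an approximation piece $(\mathrm{I})$ reduced via $\|Y_j\|\le 1/2$ and Jensen to $\tfrac1n\sum_j\|p_n(\cdot,Y_j)-p_\star(\cdot,Y_j)\|_{L^2(\mu)}^2$, then controlled through the Lipschitz bound $|p_n-p_\star|\lesssim |f_n-\bar f_\star|+|g_n-\bar g_\star|$ and the error-bound inequality~\eqref{eq:EB} coming from strong concavity of $\Phi_n$. One small point worth making explicit: after the Lipschitz step you land on $\|f_n-\bar f_\star\|_{L^2(\mu)}^2$ (population $\mu$, since $b_n$ is the canonical extension) plus $\|g_n-\bar g_\star\|_{L^2(\nu_n)}^2$, and the paper uses an extra lemma (Lemma~\ref{lem:pop_to_emp_norms_expectation} / Lemma~\ref{lem:extended_control}) to pass from the $L^2(\mu)$ norm of $f_n-\bar f_\star$ back to the empirical $L^2(\nu_n)$ norm of $g_n-\bar g_\star$ before invoking~\eqref{eq:EB}; this is the step you gloss over when saying ``route it through the density bound,'' but it is exactly the mechanism you describe.
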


\paragraph*{Density.}
Our third result,
proved in almost exactly the 
same way as our result on entropic
map estimation,
gives a comparable
result for the problem
of density estimation.
\begin{theorem}\label{thm:sample_complexity_density} Let
$p_\star$ be as in~\eqref{eqn:pop_scaling}.
Extend the empirical
analog $p_n$
from~\eqref{eqn:emp_decomp}
as in section~\ref{subsec:canonical_extension}.
Then
$$
\E \|p_n - p_\star\|^2_{L^2(\mu \otimes \nu)} \lesssim \frac{1}{n}\,.
$$
Moreover, for all $t > 0$,
with probability at least $1 - 16e^{-t}$, it holds
$$
\|p_n - p_\star \|_{L^2(\mu_n \otimes \nu_n)}^2, \|p_n - p_\star\|_{L^2(\mu \otimes \nu)}^2
\lesssim \frac{t}{n}.
$$
\end{theorem}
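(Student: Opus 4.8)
\emph{Proof plan.} The plan is to mirror the proof of Theorem~\ref{thm:sample_complexity_map}, exploiting that $p_n$ and $p_\star$ differ only through the dual potentials. Using the a priori $L^\infty$ bounds on $f_n,g_n,f_\star,g_\star$ (established in the proof of Theorem~\ref{thm:sample_complexity_map}) and the fact that $\|x-y\|^2\le 1$ on the supports of $\mu$ and $\nu$ by Assumption~\ref{assump:bded_support}, the map $(a,b)\mapsto e^{-\eta\|x-y\|^2+\eta a+\eta b}$ is Lipschitz on the region where these potentials live. Writing
\[
p_n(x,y)-p_\star(x,y)=e^{-\eta\|x-y\|^2}\bigl(e^{\eta f_n(x)+\eta g_n(y)}-e^{\eta f_\star(x)+\eta g_\star(y)}\bigr)
\]
and applying this Lipschitz bound yields the pointwise estimate $|p_n(x,y)-p_\star(x,y)|^2\lesssim |f_n(x)-f_\star(x)|^2+|g_n(y)-g_\star(y)|^2$ for all $x,y$ in the supports. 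Integrating against $\mu_n\otimes\nu_n$, respectively $\mu\otimes\nu$, reduces the two claimed bounds to controlling $\|f_n-f_\star\|_{L^2(\mu_n)}^2+\|g_n-g_\star\|_{L^2(\nu_n)}^2$, respectively $\|f_n-f_\star\|_{L^2(\mu)}^2+\|g_n-g_\star\|_{L^2(\nu)}^2$.

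For the empirical version, the additive constants relating $(f_\star,g_\star)$ to $(\bar f_\star,\bar g_\star)$ have magnitude $|(\nu_n-\nu)(g_\star)|$, which is $\lesssim\sqrt{t/n}$ with probability at least $1-2e^{-t}$ by Bernstein's inequality; it therefore suffices to bound $\|f_n-\bar f_\star\|_{L^2(\mu_n)}^2+\|g_n-\bar g_\star\|_{L^2(\nu_n)}^2$. This is precisely the core potential estimate behind Theorem~\ref{thm:sample_complexity_map}: by the strong concavity of $\Phi_n$ together with $\nabla\Phi_n(f_n,g_n)=0$ one obtains $\|(f_n,g_n)-(\bar f_\star,\bar g_\star)\|_{L^2(\mu_n)\times L^2(\nu_n)}\lesssim\|\nabla\Phi_n(\bar f_\star,\bar g_\star)\|_{L^2(\mu_n)\times L^2(\nu_n)}$, and by~\eqref{eq:norm_gradphin} the right-hand side is an average over $i$ and $j$ of squared quantities such as $1-\tfrac1n\sum_{j}p_\star(X_i,Y_j)$, each of which is a centered average of bounded terms by the marginal constraint~\eqref{eqn:pop_marg} and hence of order $n^{-1/2}$ in $L^2$, with the matching high-probability statement following from Bernstein's and bounded-differences inequalities.

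For the population version we pass through the canonical extensions. Subtracting~\eqref{eqn:fstar_from_gstar} from~\eqref{eqn:fn_canonical}, and using that $\|x-y\|\le1$ keeps $e^{-\eta f_n(x)}$ and $e^{-\eta f_\star(x)}$ bounded away from $0$ and $\infty$ for every $x\in\supp(\mu)$, gives, with $h_x(y):=e^{-\eta\|x-y\|^2+\eta g_\star(y)}$,
\[
|f_n(x)-f_\star(x)|\lesssim \nu_n\bigl(|g_n-g_\star|\bigr)+\bigl|(\nu_n-\nu)(h_x)\bigr|\lesssim \|g_n-\bar g_\star\|_{L^2(\nu_n)}+\bigl|(\nu_n-\nu)(g_\star)\bigr|+\bigl|(\nu_n-\nu)(h_x)\bigr|\, .
\]
Squaring, integrating over $\mu$, and taking expectations via Tonelli, the only term not already controlled is $\int\E|(\nu_n-\nu)(h_x)|^2\,\ud\mu(x)$; but $h_x$ is \emph{deterministic} given $x$, so $\E|(\nu_n-\nu)(h_x)|^2=\tfrac1n\operatorname{Var}_\nu(h_x)\lesssim \tfrac1n$ for each fixed $x$, and integrating yields $\lesssim n^{-1}$, with no supremum over $x$ involved. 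For the high-probability statement, $\bigl(\int|(\nu_n-\nu)(h_x)|^2\,\ud\mu(x)\bigr)^{1/2}$ is a bounded-difference function of $Y_1,\dots,Y_n$ with increments $O(1/n)$, so McDiarmid's inequality bounds it by $\lesssim n^{-1/2}+\sqrt{t/n}$. The symmetric argument based on~\eqref{eqn:gn_canonical} and~\eqref{eqn:gstar_from_fstar} controls $\|g_n-g_\star\|_{L^2(\nu)}$, and collecting the $O(1)$ many concentration events into the stated probability completes the proof.

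The main obstacle is the transfer from the empirical product measure to the population one, i.e.\ bounding the additional fluctuation $(\nu_n-\nu)(h_x)$ and its $\mu$-average at the rate $n^{-1/2}$ without any uniform control in $x$. This is where the fact that $h_x$ does not depend on the data is crucial: it reduces the estimate to an elementary scalar second-moment bound together with a single application of McDiarmid's inequality, so that, as in Theorems~\ref{thm:sample_complexity_cost} and~\ref{thm:sample_complexity_map}, no empirical process arguments are needed.
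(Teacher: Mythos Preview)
Your proposal is correct and follows essentially the same route as the paper: the pointwise Lipschitz bound on $|p_n-p_\star|$ in terms of the potential differences (Proposition~\ref{prop:lipschitz}), the error bound $\|(f_n,g_n)-(\bar f_\star,\bar g_\star)\|_n\lesssim\|\nabla\Phi_n(f_\star,g_\star)\|_n$ from strong concavity (equation~\eqref{eq:EB}), the gradient control via the marginal constraints plus bounded-differences concentration (Lemmas~\ref{lem:emp_dev_gradphi} and~\ref{lem:eps_sub_g}), and the transfer to population norms through the canonical extension formulas (Lemmas~\ref{lem:pop_to_emp_norms_expectation} and~\ref{lem:extended_control}). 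The only cosmetic difference is that you phrase the Lipschitz bound in terms of $(f_\star,g_\star)$ and then separately control the shift $|(\nu_n-\nu)(g_\star)|$, whereas the paper observes that $f_\star+g_\star=\bar f_\star+\bar g_\star$ and writes the bound directly as $|p_n-p_\star|\lesssim|f_n-\bar f_\star|+|g_n-\bar g_\star|$, which eliminates that extra term and saves one concentration event.
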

Note that the high-probability bound above holds in both $L^2(\mu_n \otimes \nu_n)$
as well as $L^2(\mu \otimes \nu)$; while the latter is more natural from a statistical estimation perspective,
the former is ultimately more useful for our subsequent applications.
In particular, this result readily implies Theorem~\ref{thm:sample_complexity_map}
and is achieved with essentially the same ideas as our
estimate on the cost, and so has a simple, easy to understand proof.

In sum, the previous three results show that the statistical picture for
entropic optimal transport is fundamentally different than
in unregularized optimal transport: fast, parametric, rates
hold in arbitrary dimension, and they can be established with simple proofs.

\paragraph*{Coupling.}
While these results provide strong evidence that
entropic optimal transport is fundamentally tractable, both
statistically and computationally,
there remains basic questions about the validity
of using $\pi_n$ as a plug-in for the coupling $\pi_\star$ in more complex statistical
procedures. There, we generally
wish to understand the deviations $\pi_\star - \pi_n$.
The next result, proved in section~\ref{subsec:main_conv},
makes a first step in this direction.
\begin{theorem}\label{thm:main_conv}
Let $\phi \in L^{\infty}(\mu \otimes \nu)$. Then, for all $t > 0$,
with
probability at least $1 - 18e^{-t^2}$,
\begin{equation}\label{eqn:subg_pistar_pin}
   |(\pi_\star - \pi_n)(\phi) |
   \lesssim \frac{\|\phi\|_{L^{\infty}(\mu \otimes \nu)}\cdot t}{\sqrt{n}}\,.
\end{equation}
\end{theorem}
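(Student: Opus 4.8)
The plan is to split $(\pi_\star - \pi_n)(\phi)$ into a purely stochastic fluctuation term, handled by a bounded-differences argument, and an estimation term, handled by the density bound of Theorem~\ref{thm:sample_complexity_density}. Writing $\pi_\star(\phi) = (\mu \otimes \nu)(\phi\, p_\star)$ and $\pi_n(\phi) = (\mu_n \otimes \nu_n)(\phi\, p_n)$, I would decompose
$$
(\pi_\star - \pi_n)(\phi) = \underbrace{\big((\mu \otimes \nu) - (\mu_n \otimes \nu_n)\big)(\phi\, p_\star)}_{=:\ \mathrm{(I)}} \;+\; \underbrace{(\mu_n \otimes \nu_n)\big(\phi\,(p_\star - p_n)\big)}_{=:\ \mathrm{(II)}}\,.
$$

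For term $\mathrm{(I)}$, the key point is that $p_\star = e^{-\eta(\|x-y\|^2 - f_\star(x) - g_\star(y))}$ is bounded above by a constant depending only on $\eta$: Assumption~\ref{assump:bded_support} forces $\|x-y\|^2 \le 1$ on the relevant set, and the potentials $f_\star, g_\star$ are uniformly bounded (as follows from~\eqref{eqn:fstar_from_gstar}--\eqref{eqn:gstar_from_fstar} together with $\nu(g_\star)=0$). Hence $\|\phi\, p_\star\|_{L^{\infty}(\mu \otimes \nu)} \lesssim \|\phi\|_{L^{\infty}(\mu \otimes \nu)}$. Since $\E[(\mu_n \otimes \nu_n)(h)] = (\mu \otimes \nu)(h)$ for every bounded $h$ by the independence of $\X$ and $\Y$, term $\mathrm{(I)}$ is exactly the deviation of the function $(X_1, \ldots, X_n, Y_1, \ldots, Y_n) \mapsto (\mu_n \otimes \nu_n)(\phi\, p_\star)$ from its mean. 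Modifying a single $X_i$ or $Y_j$ changes this quantity by at most $2\|\phi\, p_\star\|_{L^{\infty}(\mu \otimes \nu)}/n$, so McDiarmid's bounded-differences inequality over these $2n$ coordinates gives, with probability at least $1 - 2e^{-t^2}$, the sub-Gaussian estimate $|\mathrm{(I)}| \lesssim \|\phi\|_{L^{\infty}(\mu \otimes \nu)}\, t/\sqrt{n}$.

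For term $\mathrm{(II)}$, I would apply Cauchy--Schwarz in $L^2(\mu_n \otimes \nu_n)$,
$$
|\mathrm{(II)}| \le \|\phi\|_{L^2(\mu_n \otimes \nu_n)}\,\|p_\star - p_n\|_{L^2(\mu_n \otimes \nu_n)} \le \|\phi\|_{L^{\infty}(\mu \otimes \nu)}\,\|p_\star - p_n\|_{L^2(\mu_n \otimes \nu_n)}\,,
$$
where the last inequality uses $X_i \in \supp(\mu)$ and $Y_j \in \supp(\nu)$ almost surely. The high-probability bound of Theorem~\ref{thm:sample_complexity_density}, stated precisely in the empirical norm $L^2(\mu_n \otimes \nu_n)$, gives $\|p_\star - p_n\|_{L^2(\mu_n \otimes \nu_n)}^2 \lesssim t/n$ with probability at least $1 - 16e^{-t}$; replacing $t$ by $t^2$ yields $\|p_\star - p_n\|_{L^2(\mu_n \otimes \nu_n)} \lesssim t/\sqrt{n}$ with probability at least $1 - 16e^{-t^2}$, hence $|\mathrm{(II)}| \lesssim \|\phi\|_{L^{\infty}(\mu \otimes \nu)}\, t/\sqrt{n}$ on that event. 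A union bound ($2e^{-t^2} + 16e^{-t^2} = 18e^{-t^2}$) and the triangle inequality then conclude. The only delicate point is that term $\mathrm{(II)}$ must be measured in the \emph{empirical} product norm rather than in $L^2(\mu \otimes \nu)$: $p_n$ is defined intrinsically on the sample points, and passing to the population norm here would reintroduce an empirical-process term coming from the canonical extension; this is exactly why Theorem~\ref{thm:sample_complexity_density} was stated with a high-probability bound in both norms. Beyond this, only the a priori boundedness of $p_\star$ and elementary concentration are needed, with no empirical-process machinery.
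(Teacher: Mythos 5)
Your proposal is correct, and the decomposition is exactly the one the paper uses: both split off the empirical-process fluctuation $(\mu_n\otimes\nu_n-\mu\otimes\nu)(\phi\,p_\star)$ from the estimation error $(\mu_n\otimes\nu_n)(\phi\,(p_\star-p_n))$, and both control the latter by the high-probability $L^2(\mu_n\otimes\nu_n)$ bound of Theorem~\ref{thm:sample_complexity_density} with $t$ replaced by $t^2$. The single genuine difference is in how the fluctuation term is handled. You apply McDiarmid's bounded-differences inequality directly across the $2n$ coordinates $(X_1,\dots,X_n,Y_1,\dots,Y_n)$, noting each coordinate can shift the two-sample average by at most $2\|\phi\,p_\star\|_{L^\infty}/n$; this gives a sub-Gaussian tail $2\exp(-n s^2/(4\|\phi\,p_\star\|_\infty^2))$. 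The paper instead uses Lemma~\ref{lem:u_statistics_trick}, Hoeffding's permutation-averaging trick for $U$-statistics, which rewrites $(\mu_n\otimes\nu_n)(a)$ as an average over permutations $\sigma$ of the genuine i.i.d.\ sums $\frac1n\sum_k a(X_k,Y_{\sigma(k)})$ and invokes Hoeffding's lemma; this yields the slightly sharper exponent $-n s^2/(2\|a\|_\infty^2)$. Both are valid and give the stated order $t/\sqrt n$ with the stated failure probability $2e^{-t^2}+16e^{-t^2}=18e^{-t^2}$; McDiarmid is arguably more elementary, while the permutation trick is a bit sharper and is a reusable tool the paper also deploys in the cost tail bound. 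You also correctly identify that measuring the estimation term in the empirical norm $L^2(\mu_n\otimes\nu_n)$ is essential, which is exactly why Theorem~\ref{thm:sample_complexity_density} is stated in both norms.
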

In particular, the random variable $(\pi_\star - \pi_n)(\phi)$ is subGaussian with variance proxy of order $1/n$ as in standard empirical process theory even though $\pi_n$ has a dependence in all the observations that is much more complex than a simple average of independent random variables.

In the next section, we refine this result to achieve parametric rates for
a broad family of transfer learning tasks based on entropic optimal transport.

\subsection{Application to transfer learning}\label{subsec:transfer_learning_main}

Recall that in Theorem~\ref{thm:sample_complexity_map} we examined the
sample complexity of estimating the entropic regression function $b_\star(x)= \E_{\pi_\star}[Y\, | \, X=x]$.
This regression function is the solution to the following least squares problem:
\begin{equation}\label{eqn:pop_map_problem}
b_\star = \argmin_{h \in L^2(\mu)}  \E_{\pi_\star}\|h(X) - Y\|^2\,.
\end{equation} 
Similarly, $b_n$ is its empirical counterpart in the sense that
\begin{equation}\label{eqn:emp_map_problem}
b_n= \argmin_{h \in L^2(\mu_n)}  \E_{\pi_n}\|h(X) - Y\|^2\,.
\end{equation}
Theorem~\ref{thm:sample_complexity_map}
implies that the solutions of these two problems are close. The goal of this section is to explore how general this
phenomenon is.
To that end, we consider the more general setup, where in addition to the samples
$\X = (X_1, \ldots, X_n)$ and $\Y =(Y_1, \ldots, Y_n)$,
we observe real-valued labels $A_1, \ldots, A_n \in \R$
such that $A_i \sim q( \cdot\, | \, Y_i)$ for some unknown $q$. As a result, our observations consist of unlabeled observations $X_1, \ldots, X_n$, as well as labeled observations $(Y_1, A_1), \ldots, (Y_n, A_n)$ but the coupling between the labeled and unlabeled observations is not observed. The goal is to predict $A$ from $X$.  Without
making any assumptions on the coupling between the $X$ and $Y$ variables, this problem is clearly impossible. We thus make the additional assumption that the joint distribution of $(X, Y, A)$, denoted by $\omega$, is given by $(X, Y) \sim \pi_\star$
and $A \sim q(\cdot\, |\, Y)$.

A relevant benchmark arises when observations consist of $n$ independent triples $(X_i, Y_i, A_i)$, $i=1, \ldots, n$ where $(X_i, Y_i)\sim \pi_*$ and, conditionally on $Y_i$,  $A_i \sim q(\cdot|Y_i)$. In this case, the Schr\"odinger coupling $\pi_\star$ need not be learned from uncoupled data. One of the main statistical messages of this section is that the cost of estimating the Schr\"odinger coupling is at most the same as the underlying statistical task and thus does not affect the statistical rates.
To illustrate this fact, we investigate two statistical questions: regression and classification. 

\paragraph{Regression with a squared loss.} 
In the transfer learning model above, the regression problem~\eqref{eqn:pop_map_problem} generalizes to
\begin{equation}\label{eqn:pop_npls_problem}
h_\star = \argmin_{h \in L^2(\mu)} F(h):= \E|h(X) - A|^2\,,
\end{equation} 
so that
\begin{equation}\label{eqn:def_h_star}
h_\star(x)=\E_{\omega}[A \, | \, X = x] = \E_{\omega}[Ap_\star(x, Y)]\,.
\end{equation}
Analogously, define
\begin{equation}\label{eqn:emp_npls_problem}
 h_n =   \argmin_{h \in L^2(\mu_n)} \hat F(h)  :=  \frac{1}{n^2}\sum_{i, j = 1}^n p_n(X_i, Y_j) (h(X_i) - A_j)^2\,,
\end{equation}
where $p_n$ is the density defined in~\eqref{eqn:emp_decomp}.
We extend $h_n$ to an element of $L^2(\mu)$ by
using the canonical extension of $p_n$ in the formula for
$h_n$; that is, for all $x \in \R^d$, we set
\begin{equation}\label{eqn:emp_npls_soln}
h_n(x) := \frac{1}{n} \sum_{j = 1}^n A_j p_n(x, Y_j) \quad \quad 
x \in \R^d\,.
\end{equation}
Using this equation and taking successively $A=u_k^\top Y$,
where $u_1, \ldots, u_d$ is an orthonormal basis for $\R^d$
and $k \in [d]$,
it is easy to see that this model generalizes that of
equations~\eqref{eqn:pop_map_problem} and~\eqref{eqn:emp_map_problem}.
Incorporating general labels $A$
allows us to model a common
practical situation where we are
given two datasets, one labeled and the other not,
and we wish to transfer the labels from one to the other.

The next theorem is a direct extension of Theorem~\ref{thm:sample_complexity_map} to the more general setup with labels. Its proof is postponed to subsection~\ref{sec:pr_transfer} in the appendix.

\begin{theorem}\label{thm:sample_complexity_npls}
Suppose that $\omega$-almost surely, $|A| \leqslant 1$
and $h_\star, h_n$ are defined
as in~\eqref{eqn:pop_npls_problem} and~\eqref{eqn:emp_npls_soln},
respectively. Then
$$
\E \|h_\star - h_n\|_{L^2(\mu)}^2 \lesssim \frac{1}{n}\,.
$$ 
\end{theorem}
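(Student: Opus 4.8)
The plan is to mimic the proof of Theorem~\ref{thm:sample_complexity_map}, replacing the bounded vectors $Y_j$ by the bounded scalars $A_j$ and checking that the relevant moment computations go through for the triples $(X_i,Y_i,A_i)$. By~\eqref{eqn:def_h_star}, $h_\star(x)=\E[A\,p_\star(x,Y)]$, where the expectation is over a single draw of $(Y,A)$ from the $(Y,A)$-marginal of $\omega$ (that is, $Y\sim\nu$ and $A\sim q(\cdot\mid Y)$), and the observation model supplies i.i.d.\ copies $(Y_1,A_1),\dots,(Y_n,A_n)$ of this pair. Combining this with~\eqref{eqn:emp_npls_soln}, I would write $h_n(x)-h_\star(x)=\mathrm{I}(x)+\mathrm{II}(x)$, where
\begin{equation*}
  \mathrm{I}(x):=\frac1n\sum_{j=1}^n A_j\big(p_n(x,Y_j)-p_\star(x,Y_j)\big)\,,\qquad
  \mathrm{II}(x):=\frac1n\sum_{j=1}^n A_j\,p_\star(x,Y_j)-\E\big[A\,p_\star(x,Y)\big]\,,
\end{equation*}
so that by $(a+b)^2\le 2a^2+2b^2$ it suffices to bound $\E\|\mathrm{I}\|_{L^2(\mu)}^2$ and $\E\|\mathrm{II}\|_{L^2(\mu)}^2$ each by $\lesssim 1/n$.

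The term $\mathrm{II}$ is, for each fixed $x$, a centered average of i.i.d.\ random variables, so $\E[\mathrm{II}(x)]=0$ and $\E[\mathrm{II}(x)^2]=\tfrac1n\mathrm{Var}\big(A\,p_\star(x,Y)\big)\le\tfrac1n\,\E[p_\star(x,Y)^2]$, using $|A|\le 1$. Integrating over $x\sim\mu$ and applying Fubini yields $\E\|\mathrm{II}\|_{L^2(\mu)}^2\le\tfrac1n\,(\mu\otimes\nu)(p_\star^2)\lesssim\tfrac1n$, where the last step uses that $p_\star$ is uniformly bounded: by~\eqref{eqn:pop_scaling} and Assumption~\ref{assump:bded_support}, $p_\star(x,y)\le e^{\eta(\|f_\star\|_{L^\infty(\mu)}+\|g_\star\|_{L^\infty(\nu)})}\lesssim 1$ since the (normalized) dual potentials are bounded.

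For the term $\mathrm{I}$, the bound $|A_j|\le 1$ and the Cauchy--Schwarz inequality give, pointwise in $x$, $\mathrm{I}(x)^2\le\tfrac1n\sum_{j=1}^n\big(p_n(x,Y_j)-p_\star(x,Y_j)\big)^2$, hence $\|\mathrm{I}\|_{L^2(\mu)}^2\le\|p_n-p_\star\|_{L^2(\mu\otimes\nu_n)}^2$ after integrating against $\mu$. It therefore remains to show $\E\|p_n-p_\star\|_{L^2(\mu\otimes\nu_n)}^2\lesssim 1/n$, which is exactly the estimate (in the very same mixed norm) that underlies the proof of Theorem~\ref{thm:sample_complexity_map}: one linearizes the exponentials in~\eqref{eqn:emp_decomp} and~\eqref{eqn:pop_scaling} via $|e^a-e^b|\le e^{\max(a,b)}|a-b|$ and the uniform boundedness of $f_n,g_n,\bar f_\star,\bar g_\star$ and their canonical extensions, reducing the claim to $\E\big(\|f_n-\bar f_\star\|_{L^2(\mu)}^2+\|g_n-\bar g_\star\|_{L^2(\nu_n)}^2\big)\lesssim 1/n$; then, since $(f_n,g_n)$ and $(\bar f_\star,\bar g_\star)$ are both normalized to have zero $\nu_n$-mean in the second coordinate, the $\alpha$-strong concavity of $\Phi_n$ dominates $\|f_n-\bar f_\star\|_{L^2(\mu_n)}^2+\|g_n-\bar g_\star\|_{L^2(\nu_n)}^2$ by $\|\nabla\Phi_n(\bar f_\star,\bar g_\star)\|_{L^2(\mu_n)\times L^2(\nu_n)}^2$, whose expectation is $\lesssim 1/n$ by the explicit formula~\eqref{eq:norm_gradphin} and the population marginal identities~\eqref{eqn:pop_marg}; finally, the passage from $\mu_n$ to $\mu$ in the $f$-component is handled by directly estimating the pointwise second moment $\E[(f_n(x)-\bar f_\star(x))^2]$ and integrating over $x\sim\mu$, which (after splitting off the already-bounded contribution of $g_n-\bar g_\star$) reduces to controlling an empirical average of a bounded function.

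I expect the only genuinely delicate point to be that the needed control of $p_n-p_\star$ is in the mixed norm $L^2(\mu\otimes\nu_n)$ --- empirical in the $y$ variable but carrying the true marginal $\mu$ in the $x$ variable --- which is not literally one of the two norms in the statement of Theorem~\ref{thm:sample_complexity_density}; it is nonetheless reached by the same elementary strong-concavity computation, and the fact that throughout we only integrate the $x$ variable against $\mu$ (rather than taking a supremum over $x$) is precisely what allows us to avoid any empirical process machinery. Assembling the two pieces gives $\E\|h_n-h_\star\|_{L^2(\mu)}^2\le 2\E\|\mathrm{I}\|_{L^2(\mu)}^2+2\E\|\mathrm{II}\|_{L^2(\mu)}^2\lesssim 1/n$.
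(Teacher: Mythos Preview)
Your proposal is correct and follows essentially the same route as the paper: the same two-term decomposition (your $\mathrm{II}$ is the paper's first term, your $\mathrm{I}$ the second), the variance-of-i.i.d.\ bound for $\mathrm{II}$, the Cauchy--Schwarz/Jensen reduction of $\mathrm{I}$ to $\|p_n-p_\star\|_{L^2(\mu\otimes\nu_n)}^2$, and then the same chain (Proposition~\ref{prop:lipschitz}, Lemma~\ref{lem:pop_to_emp_norms_expectation}, \eqref{eq:EB}, Lemma~\ref{lem:emp_dev_gradphi}) that the paper invokes by pointing back to the proof of Theorem~\ref{thm:sample_complexity_map}. Your remark that the relevant density bound is in the mixed norm $L^2(\mu\otimes\nu_n)$ rather than either norm in Theorem~\ref{thm:sample_complexity_density} is exactly right and matches how the paper proceeds.
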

A careful inspection of its proof indicates that one may achieve not only rates of estimation in $L_2(\mu)$ but also pointwise guarantees. These are instrumental in the analysis of plug-in classifiers that we carry out now.

\paragraph{Plug-in classification.} 
Assume now that $A \in \{0,1\}$ and the goal is to predict $A$ from $X$ using a classifier $h: \R^d \to \{0,1\}$. We measure the performance of a classifier $h$ using the classification error $\p[h(X)\neq Y]$. It is well known that the best classification error is achieved by the Bayes classifier $\ell_*(x)=\1(h_\star(X)>1/2)$, where $\1(\cdot)$ denotes the indicator function and $h_\star$ is the regression function defined in~\eqref{eqn:def_h_star}. This leads us to consider the excess-risk of a classifier $\ell$, which is defined as~\cite{DevGyoLug96}
$$
\cE(\ell)= \p[\ell(X)\neq Y] -  \p[\ell_\star(X)\neq Y] \ge 0\,.
$$

This explicit form of the Bayes classifier together with the availability of a good nonparametric estimator $h_n$ defined in~\eqref{eqn:emp_npls_soln} suggest a natural \emph{plug-in} classifier $\ell_n$ defined by
$$
\ell_n(x) =\1(h_n(x)>1/2)\,.
$$
Such estimators were investigated in~\cite{AudTsy07} under the classical Mammen-Tsybakov noise condition which controls the steepness of the function $h_\star$ around value $1/2$. We recall its definition here for convenience.

\begin{defn}[Mammen-Tsybakov noise condition \cite{MamTsy99}]  We say that the distribution $\omega$ satisfies the Mammen-Tsybakov condition with parameter $\alpha>0$, if there exists a constant $C_0>0$ such that $\mu(0<|h_\star-1/2|\le \eps )\le C_0 \eps^\alpha$ for all $\eps \in (0,1/2]$.
\end{defn}
Note that this condition governs simultaneously how steeply $h_\star$ crosses level $1/2$ and how much $\mu$ puts mass around the noise region $\{x:h_\star(x)=1/2\}$. For this reason, it is also referred to \emph{margin} or \emph{low noise} condition.

In their seminal paper~\cite{AudTsy07} establish a key lemma, which relates the excess-risk of a plug-in estimator $\ell(x)=\1(h(x)>1/2)$ to that of the pointwise estimation error of $h_\star$ by $h$; see~\cite{AudTsy07}, Lemma~3.1.
Combining this result with pointwise guarantees for $h_n$, we establish the following result.
\begin{theorem}
\label{thm:classif}
Suppose that $\omega$ satisfies the Mammen-Tsybakov condition with parameter $\alpha>0$. Then the excess risk of the plug-in classifier $\ell_n$ defined above has excess risk bounded as
$$
\E[\cE(\ell_n)]\lesssim n^{-\frac{1+\alpha}{2}}\,.
$$
\end{theorem}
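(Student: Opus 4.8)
\emph{Proof plan.}
The plan is to combine the reduction of classification excess risk to pointwise regression error from~\cite{AudTsy07} with a pointwise, subgaussian refinement of Theorem~\ref{thm:sample_complexity_npls}. First I would record the standard identity (see~\cite{DevGyoLug96})
\[
\cE(\ell_n)=\E_\omega\big[\,|2h_\star(X)-1|\;\1\!\big(\ell_n(X)\neq \ell_\star(X)\big)\big]\,,
\]
valid since $A\in\{0,1\}$ and $h_\star(x)=\E_\omega[A\mid X=x]$, the expectation being over a fresh $(X,A)\sim\omega$ independent of the data. Because $\ell_n$ and $\ell_\star$ are thresholds of $h_n$ and $h_\star$ at level $1/2$, the event $\{\ell_n(x)\neq\ell_\star(x)\}$ forces $|h_n(x)-h_\star(x)|\ge|h_\star(x)-\tfrac12|$; bounding $|2h_\star(x)-1|=2|h_\star(x)-\tfrac12|$ on this event gives
\[
\cE(\ell_n)\le 2\,\E_\omega\Big[\,|h_\star(X)-\tfrac12|\;\1\!\big(|h_n(X)-h_\star(X)|\ge|h_\star(X)-\tfrac12|\big)\Big]\,,
\]
which is precisely Lemma~3.1 of~\cite{AudTsy07}.

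The core of the argument is the pointwise deviation bound
\begin{equation}\label{eqn:pointwise_plan}
\p\big(|h_n(x)-h_\star(x)|\ge\delta\big)\le c_1\,e^{-c_2 n\delta^2}\qquad\text{for $\mu$-a.e.\ $x$ and all }\delta>0\,,
\end{equation}
with $c_1,c_2$ depending only on $\eta$. I would establish~\eqref{eqn:pointwise_plan} by re-running the proof of Theorem~\ref{thm:sample_complexity_npls} at a single point. Using the canonical extension~\eqref{eqn:emp_npls_soln} and formula~\eqref{eqn:def_h_star}, decompose
\[
h_n(x)-h_\star(x)=\underbrace{\tfrac1n\textstyle\sum_{j=1}^n A_j\big(p_n(x,Y_j)-p_\star(x,Y_j)\big)}_{T_1(x)}+\underbrace{\tfrac1n\textstyle\sum_{j=1}^n\big(A_j p_\star(x,Y_j)-h_\star(x)\big)}_{T_2(x)}\,.
\]
Since $(Y_j,A_j)$ has the law of $(Y,A)$ under $\omega$, we have $\E[A_j p_\star(x,Y_j)]=\E_\omega[A\,p_\star(x,Y)]=h_\star(x)$ by~\eqref{eqn:def_h_star}, while $|A_j p_\star(x,Y_j)|\le\|p_\star\|_{L^\infty(\mu\otimes\nu)}\lesssim 1$, so Hoeffding's inequality yields $\p(|T_2(x)|\ge\delta)\le 2e^{-cn\delta^2}$. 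For $T_1(x)$, the bounds $|A_j|\le1$ and Cauchy--Schwarz give $|T_1(x)|\le\|p_n(x,\cdot)-p_\star(x,\cdot)\|_{L^2(\nu_n)}$, and I would show the right-hand side is $\lesssim\sqrt{t/n}$ on an event of probability at least $1-c\,e^{-t}$: writing $p_n(x,y)-p_\star(x,y)=p_\star(x,y)\big(e^{\eta(f_n(x)-f_\star(x))+\eta(g_n(y)-g_\star(y))}-1\big)$, invoking the a priori $L^\infty$ bounds on the dual potentials and their canonical extensions over the supports together with $|e^u-1|\lesssim|u|$ for bounded $u$, one obtains $\|p_n(x,\cdot)-p_\star(x,\cdot)\|_{L^2(\nu_n)}^2\lesssim|f_n(x)-f_\star(x)|^2+\|g_n-g_\star\|_{L^2(\nu_n)}^2$. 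The second summand is controlled along the way to Theorem~\ref{thm:sample_complexity_density} (ultimately via strong concavity of $\Phi_n$ and the gradient identity~\eqref{eq:norm_gradphin}); the first is controlled by inserting $g_n=g_\star+(g_n-g_\star)$ into the canonical-extension identity~\eqref{eqn:fn_canonical} and comparing with~\eqref{eqn:fstar_from_gstar}, which reduces the discrepancy to a Hoeffding term ($\nu_n$ versus $\nu$ of the fixed bounded function $y\mapsto e^{-\eta\|x-y\|^2+\eta g_\star(y)}$) plus a multiple of $\|g_n-g_\star\|_{L^2(\nu_n)}$. Combining the estimates on $T_1$ and $T_2$ and optimizing over the failure level $t$ gives~\eqref{eqn:pointwise_plan}.

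Finally I would combine~\eqref{eqn:pointwise_plan} with the Mammen--Tsybakov condition as in~\cite{AudTsy07}. Taking expectations in the displayed bound for $\cE(\ell_n)$, conditioning on $X$, and writing $R(x):=|h_\star(x)-\tfrac12|$,
\[
\E[\cE(\ell_n)]\le 2\,\E_{X\sim\mu}\Big[R(X)\,\p\big(|h_n(X)-h_\star(X)|\ge R(X)\mid X\big)\Big]\le 2c_1\,\E_{X\sim\mu}\big[R(X)\,e^{-c_2 n R(X)^2}\big]\,.
\]
Using $R\,e^{-c_2 n R^2}\le\int_R^\infty 2c_2 n s^2 e^{-c_2 n s^2}\,\ud s$, Fubini, and the margin bound $\mu(R\le s)\le C_0 s^\alpha$ for $s\le\tfrac12$ (the range $s>\tfrac12$ contributing a quantity super-exponentially small in $n$, since $R\le\tfrac12$), this is at most a constant multiple of $n\int_0^\infty s^{2+\alpha}e^{-c_2 n s^2}\,\ud s\asymp n^{-\frac{1+\alpha}{2}}$, which is the claimed bound.

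The main obstacle is~\eqref{eqn:pointwise_plan}. The high-probability estimate of Theorem~\ref{thm:sample_complexity_density} controls $\|p_n-p_\star\|_{L^2(\mu_n\otimes\nu_n)}$, which is an \emph{average} over the sample points $X_i$, whereas here one needs control of $p_n(x,\cdot)$ at an \emph{arbitrary fixed} $x$ (a fresh draw from $\mu$, not in the sample), with constants independent of $x$. The route above bypasses this because the canonical extensions are given by explicit pointwise formulas, so that the deviation $f_n(x)-f_\star(x)$ at a fixed $x$ collapses to a one-dimensional empirical average plus the already-controlled quantity $\|g_n-g_\star\|_{L^2(\nu_n)}$; ensuring that the resulting constants are genuinely uniform in $x$ relies on the a priori $L^\infty$ bounds for $f_\star,g_\star,f_n,g_n$ over the bounded supports, and this uniformity is the delicate point.
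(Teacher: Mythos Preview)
Your proposal is correct and follows essentially the same route as the paper's proof: the same decomposition $h_n(x)-h_\star(x)=T_1(x)+T_2(x)$, Hoeffding for $T_2$, the reduction of $T_1$ to $|f_n(x)-f_\star(x)|+\|g_n-g_\star\|_{L^2(\nu_n)}$ via Lipschitzness of the exponential (the paper's Proposition~\ref{prop:lipschitz}), control of $|f_n(x)-f_\star(x)|$ through the canonical extension identity~\eqref{eqn:fn_canonical} (yielding a Hoeffding term plus $\|g_n-g_\star\|$), control of $\|g_n-g_\star\|$ via the error bound~\eqref{eq:EB} and the gradient concentration Lemma~\ref{lem:eps_sub_g}, and finally the Audibert--Tsybakov reduction. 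The only cosmetic difference is that the paper works with the recentered potentials $(\bar f_\star,\bar g_\star)$ rather than $(f_\star,g_\star)$ so that~\eqref{eq:EB} applies directly; your version is equivalent up to the harmless additive $|\nu_n(g_\star)-\nu(g_\star)|$, itself $O(\sqrt{t/n})$ by Hoeffding. Your explicit final integration is more detailed than the paper, which simply defers to Lemma~3.1 of~\cite{AudTsy07}.
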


Note that the rate of Theorem~\ref{thm:classif} is at least as fast as $1/\sqrt n$ for when $\alpha=0$. As $\alpha\to \infty$, it can be easily verified that it leads to exponential rates as in~\cite{KolBez05}.

\section{Proofs }
\label{sec:sample_complexity}

In this section we give the proofs for our convergence results in expectation and postpone the proofs of tail bounds to the appendix. Indeed, the
proofs in expectation are simpler and already contain the key ingredients to obtain the parametric rates of convergence showcased in section~\ref{sec:main_results}.

\subsection{Structural results on entropic optimal transport}\label{sec:preliminaries}
We begin with some key structural results---chiefly  strong concavity for the empirical dual problem---which drive the parametric rates of convergence. The strong concavity
result relies on boundedness of the dual potentials which follows from the bounded support assumption~\ref{assump:bded_support} and is established by the following proposition,
proved in section~\ref{subsec:bded_dual}.

\begin{prop}[Bounded dual potentials and densities]\label{prop:bded_dual}
As above, specify the unique dual potentials $(f_n, g_n)$
and $(f_\star, g_\star)$ such that
$\nu_n(g_n) = \nu(g_\star) = 0$, and extend $(f_n, g_n)$
according to~\eqref{eqn:fn_canonical} and~\eqref{eqn:gn_canonical},
respectively.
Then
$$
\| f_n\|_{L^{\infty}(\mu)},\, \, \|g_n \|_{L^{\infty}(\nu)} \le 2\,, \qquad 
 \|f_\star \|_{L^{\infty}(\mu)}, \, \, \|g_\star\|_{L^{\infty}(\nu)} \leqslant 1\,.
$$
In particular, for $(\mu \otimes \nu)$-almost every $(x, y)$,
\begin{equation}\label{eqn:unif_bd}
e^{-5\eta} \leqslant p_\star(x,y), \, \, p_n(x,y ) \leqslant e^{5\eta }\,.
\end{equation}
\end{prop}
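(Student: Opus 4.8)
The plan is to exploit the variational characterizations~\eqref{eqn:fstar_from_gstar}--\eqref{eqn:gstar_from_fstar} for the population potentials, and their exact analogs~\eqref{eqn:fn_canonical}--\eqref{eqn:gn_canonical} for the empirical potentials, together with Assumption~\ref{assump:bded_support}, which guarantees $\|x-y\|^2 \le 1$ on the relevant supports. First I would argue in the population case. Since $\nu(g_\star)=0$ and $\nu$ is a probability measure, there exists a point (or, more precisely, for any $\varepsilon>0$ a positive-measure set) where $g_\star \le 0$ and one where $g_\star \ge 0$; combining this with Jensen's inequality applied to~\eqref{eqn:fstar_from_gstar} gives a two-sided bound on $f_\star$ in terms of $\|g_\star\|_{L^\infty(\nu)}$ and the cost bound $\|x-y\|^2\le 1$. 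Symmetrically, plugging this back into~\eqref{eqn:gstar_from_fstar} yields a bound on $\|g_\star\|_{L^\infty(\nu)}$ in terms of itself with a contraction, which closes the loop. Concretely, from $e^{-\eta f_\star(x)} = \int e^{-\eta\|x-y\|^2 + \eta g_\star(y)}\ud\nu(y)$ and $0\le\|x-y\|^2\le 1$ one gets $e^{-\eta}\,\nu(e^{\eta g_\star}) \le e^{-\eta f_\star(x)} \le \nu(e^{\eta g_\star})$, and Jensen gives $\nu(e^{\eta g_\star})\ge e^{\eta\nu(g_\star)}=1$, so $f_\star(x) \le 1$; for the lower bound use $g_\star \ge -\|g_\star\|_\infty$ to get $e^{-\eta f_\star(x)} \le e^{-\eta\|g_\star\|_\infty}\cdot(\text{something} \le 1)$... — this is the routine bookkeeping I would carry out carefully to land $\|f_\star\|_\infty, \|g_\star\|_\infty \le 1$.

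For the empirical potentials, the argument is identical verbatim with $(\mu,\nu)$ replaced by $(\mu_n,\nu_n)$ and the normalization $\nu_n(g_n)=0$, using~\eqref{eqn:fn_canonical}--\eqref{eqn:gn_canonical}; since these canonical-extension identities hold by definition for \emph{all} $x,y\in\R^d$, the resulting bounds hold on all of $\R^d$, hence a fortiori $\mu$- and $\nu$-almost everywhere (not just $\mu_n$-, $\nu_n$-a.e.), which is what the statement demands. The reason the empirical bound is $2$ rather than $1$ is the translation: the natural normalization coming out of the fixed-point argument is $\nu_n(g_n)=0$, but the stated $L^\infty(\nu)$ bound is with respect to the population measure $\nu$, so one loses an additive constant $|\nu(g_n) - \nu_n(g_n)|$; alternatively, run the same Jensen argument but note that $\nu_n(e^{\eta g_n})\ge 1$ only controls the $\nu_n$-average, and bounding $g_n$ on the support of $\nu$ requires one extra application of the defining identity, which costs the extra factor. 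I would present whichever of these two routes is cleaner; the factor $2$ is generous enough that either works.

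Finally, the density bounds~\eqref{eqn:unif_bd} follow immediately: $p_\star(x,y) = e^{-\eta(\|x-y\|^2 - f_\star(x) - g_\star(y))}$ with $\|x-y\|^2\in[0,1]$ and $|f_\star(x)|,|g_\star(y)|\le 1$ gives exponent in $[-3\eta, 2\eta]\subset[-5\eta,5\eta]$, and similarly for $p_n$ with the bound $2$ giving exponent in $[-5\eta, 4\eta]\subset[-5\eta,5\eta]$. The main obstacle — really the only subtlety — is the bookkeeping in the previous paragraph: making sure the a.e.\ statement is with respect to the population measures and correctly tracking the additive constant incurred by the mismatch between the $\nu_n$-normalization of $g_n$ and the $\nu$-norm in which we must bound it. Everything else is Jensen's inequality plus the cost bound from Assumption~\ref{assump:bded_support}.
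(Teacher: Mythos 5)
Your sketch correctly identifies the ingredients (marginal constraints, $\|x-y\|^2\le 1$, Jensen with the normalization $\nu(g_\star)=0$) and correctly executes the first step, $f_\star\le 1$, as well as the follow-up $g_\star\ge -1$. But the ``contraction'' idea you propose to close the loop does not work, and this is a genuine gap. Tracking the constants: Jensen with $\nu(g_\star)=0$ gives $f_\star\le 1$; plugging $f_\star\le 1$ into the $\nu$-marginal gives $g_\star\ge -1$; plugging $g_\star\ge -1$ back into the $\mu$-marginal gives only $f_\star\le 2$, which is worse, not a contraction. The chain
$$
\esup f_\star \le 1,\qquad -\einf f_\star \le \esup g_\star,\qquad -\einf g_\star\le 1,\qquad \esup g_\star \le 1 + (-\einf f_\star)
$$
is circular: the last two inequalities combine to the vacuous $\esup g_\star\le 1+\esup g_\star$. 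Your remark that ``there exists a point where $g_\star\le 0$'' gives, after one application of the defining identity, only $\mu(e^{\eta f_\star})\ge 1$, which by Jensen goes the \emph{wrong} direction (it does not yield $\mu(f_\star)\ge 0$). The missing idea in the paper's proof is that $\mu(f_\star)\ge 0$, and this comes not from the fixed-point equations alone but from strong duality: the primal value $S(\mu,\nu)$ is non-negative, the dual objective at $(f_\star,g_\star)$ equals $S(\mu,\nu)$, and by the marginal constraints the penalty term cancels, leaving exactly $\mu(f_\star)+\nu(g_\star)=\mu(f_\star)\ge 0$. Once you have $\mu(f_\star)\ge 0$, Jensen applies symmetrically to give $g_\star\le 1$ and then $f_\star\ge -1$. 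Without it, no amount of bookkeeping on the Schr\"odinger system closes the loop; you would need a genuinely different mechanism (e.g.\ a Birkhoff/Hilbert-metric contraction argument), which your sketch does not actually carry out.

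Your explanation of the bound $2$ for the canonically extended $f_n$ is also not the right mechanism. It is not a matter of replacing $\nu_n$ by $\nu$ in the normalization of $g_n$ (that difference is a random quantity and cannot be bounded deterministically); the statement is a deterministic $L^\infty$ bound. The extension bound comes directly from~\eqref{eqn:fn_canonical}: since $\|g_n\|_{L^\infty(\nu_n)}\le 1$ is established first, one gets $\frac1n\sum_j e^{-\eta+\eta g_n(Y_j)}\ge e^{-2\eta}$ (the paper uses the pointwise bound $g_n(Y_j)\ge -1$ rather than Jensen here), hence $f_n(x)\le 2$ for all $x$ with $\|x\|\le 1/2$, and dropping the cost term gives $f_n(x)\ge -1$ for all $x\in\R^d$. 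The stated $2$ is not sharp (Jensen in fact gives $1$), but the proof does not go through the translation $|\nu(g_n)-\nu_n(g_n)|$ that you suggest. Your computation of the final density bounds~\eqref{eqn:unif_bd} from the potential bounds is correct.
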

We remark that, unfortunately, this exponential dependence on $\eta$
is unavoidable in the worst case. For an example,
see~\cite[section 3]{altschuler2022asymptotics}.

Recall that we denote
the empirical dual objective by $\Phi_n$, as in~\eqref{eqn:emp_dual_objective}.
The next lemma simply says that $\Phi_n$ is strongly concave so long
as we remove the symmetry $(f, g) \mapsto (f + c, g - c)$ using the condition $\nu(g)=0$ and consider only uniformly
bounded potentials. More precisely, we work on the convex set of dual potentials
$$
\cS_L := \{(f ,g) \in L^{\infty}(\mu_n)
\times L^{\infty}(\nu_n) \colon \|f\|_{L^{\infty}(\mu_n)} \lor \|g\|_{L^{\infty}(\nu_n)} \leqslant L,  \, \, \nu_n(g) = 0 \}\,.
$$
In this section, we use the shorthand notation $\langle\cdot, \cdot\rangle_n$ and $\|\cdot\|_n$ respectively to denote the inner product and norm of $L^2(\mu_n) \times L^2(\nu_n)$.

We are now in a position to state our first structural result.
%We postpone its proof to section~\ref{subsec:emp_dual_sc}.
   
\begin{lemma}[Strong concavity of the empirical dual]\label{lem:emp_dual_sc}
For each $L > 0$, $\Phi_n$ is $\delta$-strongly concave with respect to the norm $\|\cdot\|_{n}$
on $S_L$
 for $\delta = \eta e^{-\eta (2L + 1)}$ in the sense that for any $(f,g), (f',g') \in \cS_L$, we have almost surely
$$
\Phi_n(f,g)-\Phi_n(f',g')\ge \langle \nabla \Phi_n(f,g), (f,g)-(f',g') \rangle_n + \frac{\delta}{2}\|(f,g)-(f',g')\|^2_n\,.
$$
\end{lemma}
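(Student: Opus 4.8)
The plan is to compute the Hessian of $\Phi_n$ directly and lower-bound it on $\cS_L$. Since $\Phi_n$ is smooth in $(f,g)$, strong concavity in the sense of the stated inequality is equivalent to a uniform lower bound $-\nabla^2 \Phi_n(f,g) \succeq \delta \,\mathrm{Id}$ as a quadratic form on the tangent space of $\cS_L$, i.e. over directions $(\varphi,\psi) \in L^2(\mu_n) \times L^2(\nu_n)$ with $\nu_n(\psi) = 0$; the standard integral form of Taylor's theorem along the segment from $(f',g')$ to $(f,g)$ — which stays in $\cS_L$ by convexity — then yields the claimed inequality.

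First I would differentiate the expression~\eqref{eq:gradphin} for $\nabla \Phi_n$ once more. Writing $p(x,y) = e^{-\eta(\|x-y\|^2 - f(x) - g(y))}$ as in~\eqref{eqn:PQ_scaling}, a direct calculation gives, for a direction $(\varphi,\psi)$,
\begin{equation*}
-\langle \nabla^2 \Phi_n(f,g)(\varphi,\psi), (\varphi,\psi)\rangle_n = \eta\,(\mu_n \otimes \nu_n)\big( (\varphi(x) + \psi(y))^2\, p(x,y) \big)\,.
\end{equation*}
Next I would lower-bound the density: on $\cS_L$ we have $\|f\|_{L^\infty(\mu_n)} \lor \|g\|_{L^\infty(\nu_n)} \le L$, and since $\|x\|, \|y\| \le 1/2$ by Assumption~\ref{assump:bded_support} we get $\|x-y\|^2 \le 1$, hence $p(x,y) \ge e^{-\eta(1 + 2L)}$ pointwise on the support of $\mu_n \otimes \nu_n$. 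Therefore
\begin{equation*}
-\langle \nabla^2 \Phi_n(f,g)(\varphi,\psi), (\varphi,\psi)\rangle_n \ge \eta e^{-\eta(2L+1)}\,(\mu_n \otimes \nu_n)\big( (\varphi(x)+\psi(y))^2 \big)\,.
\end{equation*}

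The last — and only slightly delicate — step is to check that $(\mu_n \otimes \nu_n)\big((\varphi(x)+\psi(y))^2\big) \ge \|(\varphi,\psi)\|_n^2 = \mu_n(\varphi^2) + \nu_n(\psi^2)$ on the relevant tangent space. Expanding the square gives $\mu_n(\varphi^2) + \nu_n(\psi^2) + 2\,\mu_n(\varphi)\,\nu_n(\psi)$, so the cross term must be nonnegative; this is exactly where the normalization $\nu_n(\psi) = 0$ (inherited from $\nu_n(g) = \nu_n(g') = 0$) is used, killing the cross term entirely. Combining the three displays with $\delta = \eta e^{-\eta(2L+1)}$ and integrating twice along the segment completes the proof. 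The main obstacle is essentially bookkeeping: making sure the Taylor-with-integral-remainder argument is applied on the convex set $\cS_L$ so that all intermediate points have bounded potentials, and that the directional derivatives are taken within the subspace $\{\nu_n(\psi)=0\}$ so the cross term vanishes; everything else is a routine differentiation and a pointwise bound on $p$.
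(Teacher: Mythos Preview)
Your proposal is correct and is essentially identical to the paper's proof. The paper parametrizes the segment by $h(t):=\Phi_n((1-t)f+tf',(1-t)g+tg')$ and computes $h''(t)$ directly, obtaining exactly your quadratic form $-\eta(\mu_n\otimes\nu_n)\big((\varphi(x)+\psi(y))^2 p(x,y)\big)$ with $(\varphi,\psi)=(f-f',g-g')$, then uses the same pointwise bound $p\ge e^{-\eta(2L+1)}$ and the same cancellation of the cross term via $\nu_n(g)=\nu_n(g')=0$.
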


\begin{proof} Fix two pairs $(f, g), (f' , g') \in \cS_L$.
Then it suffices to show that the function 
$$
h(t) := \Phi_n((1 - t)f + t f', (1 - t)g + tg')\,,
$$ satisfies
$$
h''(t) \leqslant -\delta \|(f, g) - (f', g')\|^2_n \, ,\quad \quad 
\forall  t\in [0,1]\,.
$$ Fix some $t \in [0,1]$. Observe that the linear terms cancel so that
\begin{align*}
h''(t) = -\frac{\eta}{n^2} \sum_{i, j = 1}^n& (f(X_i) - f'(X_i)
+ g(Y_j) - g'(Y_j))^2\\
&\times\exp \big(-\eta \|X_i - Y_j\|^2 +  \eta t( f(X_i) +
   g(Y_j) ) + \eta  (1 - t)(f'(X_i) +  g'(Y_j)) \big)\,.
\end{align*}
Using our bounded support Assumption~\ref{assump:bded_support},
along with the definition of $\cS_L$, yields
$$
h''(t) \leqslant - \eta e^{-\eta (2L + 1)} \cdot \frac{1}{n^2}
\sum_{i, j = 1}^n (f(X_i) - f'(X_i)
+ g(Y_j) - g'(Y_j))^2\,.
$$ Expanding out these squared terms, we use the fact that
$\nu_n(g) = \nu_n(g') = 0$ to find
\begin{align*}
\frac{1}{n^2} \sum_{i, j = 1}^n (f(X_i) - f'(X_i)
+ g(Y_j) - g'(Y_j))^2 &= \|(f, g) - (f', g')\|_n^2
+ 2\mu_n(f - f')\cdot \nu_n(g - g') \\
&= \|(f, g) - (f', g')\|_n^2.
\end{align*} Hence the result.
\end{proof}

The following consequence
of strong concavity of $\Phi_n$, known as a {Polyak-\L{}ojasiewicz
(PL) inequality}, is instrumental in our results.
\begin{prop}[PL inequality for the empirical dual]\label{prop:emp_objective_PL}
Let $L > 0$ be such that $(f_n, g_n) \in \cS_L$.
Then for any $(f, g) \in \cS_L\,$,
\begin{equation}
    \label{EQ:pl}
    \Phi_n(f_n, g_n) - \Phi_n(f, g) \leqslant 
 \frac{e^{\eta(2 L + 1)}}{2\eta} \| \nabla \Phi_n(f, g)\|_{n}^2\,.
\end{equation}
\end{prop}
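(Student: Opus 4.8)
The plan is to obtain~\eqref{EQ:pl} directly from the strong concavity of $\Phi_n$ established in Lemma~\ref{lem:emp_dual_sc}, following the classical route from strong concavity to a Polyak--\L{}ojasiewicz inequality. The only inputs are Lemma~\ref{lem:emp_dual_sc} (with its explicit modulus $\delta = \eta e^{-\eta(2L+1)}$) and an elementary Young-type inequality; in particular, no optimality property of $(f_n,g_n)$ beyond the membership $(f_n,g_n) \in \cS_L$ is needed.

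First I would apply the strong concavity inequality of Lemma~\ref{lem:emp_dual_sc} to the two pairs $(f,g) \in \cS_L$ and $(f',g') = (f_n,g_n) \in \cS_L$ --- both belong to the convex set $\cS_L$, the first by hypothesis and the second by the standing assumption of the Proposition --- which gives
$$\Phi_n(f,g) - \Phi_n(f_n,g_n) \ge \langle \nabla\Phi_n(f,g),\, (f,g) - (f_n,g_n)\rangle_n + \frac{\delta}{2}\,\big\|(f,g)-(f_n,g_n)\big\|_n^2\,.$$
Writing $z := (f_n,g_n) - (f,g) \in L^2(\mu_n)\times L^2(\nu_n)$ and rearranging, this reads
$$\Phi_n(f_n,g_n) - \Phi_n(f,g) \le \langle \nabla\Phi_n(f,g),\, z\rangle_n - \frac{\delta}{2}\|z\|_n^2\,.$$
It then remains to bound the right-hand side independently of $z$: by Cauchy--Schwarz in $L^2(\mu_n)\times L^2(\nu_n)$ and Young's inequality $ab \le \tfrac{1}{2\delta}a^2 + \tfrac{\delta}{2}b^2$ applied with $a = \|\nabla\Phi_n(f,g)\|_n$ and $b = \|z\|_n$,
$$\langle \nabla\Phi_n(f,g),\, z\rangle_n - \frac{\delta}{2}\|z\|_n^2 \le \|\nabla\Phi_n(f,g)\|_n\,\|z\|_n - \frac{\delta}{2}\|z\|_n^2 \le \frac{1}{2\delta}\|\nabla\Phi_n(f,g)\|_n^2\,.$$
Substituting $1/\delta = e^{\eta(2L+1)}/\eta$ yields exactly~\eqref{EQ:pl}.

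I do not anticipate a genuine obstacle here. The one point deserving a word of care is that $\nabla\Phi_n$ and the pairing $\langle\cdot,\cdot\rangle_n$ must be read as objects on the Hilbert space $L^2(\mu_n)\times L^2(\nu_n)$ --- so that Cauchy--Schwarz applies and $\|\nabla\Phi_n(f,g)\|_n$ is the corresponding dual (Riesz) norm, consistent with~\eqref{eq:gradphin}--\eqref{eq:norm_gradphin}. An essentially equivalent alternative, closer in spirit to the proof of Lemma~\ref{lem:emp_dual_sc}, would be to restrict $\Phi_n$ to the segment $t \mapsto \Phi_n\big((1-t)(f,g) + t(f_n,g_n)\big)$, which stays in $\cS_L$ by convexity, and to integrate the one-dimensional bound $h''(t) \le -\delta\|(f,g)-(f_n,g_n)\|_n^2$; this leads to the same inequality after optimizing the resulting quadratic in the segment length.
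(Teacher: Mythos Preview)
Your proof is correct and follows essentially the same route as the paper. The paper packages the argument into a general Hilbert-space Polyak--\L{}ojasiewicz lemma (minimizing the quadratic lower bound from strong convexity explicitly at $w=v-\nabla\rho(v)/\alpha$) and then specializes, whereas you work directly with $\Phi_n$ and replace the explicit minimization by Cauchy--Schwarz plus Young; these are equivalent ways of bounding the same quadratic and involve no different ideas.
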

Proposition~\ref{prop:emp_objective_PL} is a weaker form
of strong concavity that has recently been
thoroughly studied in the optimization literature~\cite{karimi2016linear},
but for the reader's convenience
we include a proof in
section~\ref{subsec:emp_objective_PL}. 

In light of Proposition~\ref{prop:bded_dual}, we have that $(\bar f_\star, \bar g_\star):=(f_\star+\nu_n(g_\star),g_\star-\nu_n(g_\star)) \in S_2$ almost surely. On the one hand, together with the PL inequality~\eqref{EQ:pl} it yields
\begin{align*}
    \Phi_n(f_n, g_n) - \Phi_n(\bar f_\star, \bar g_\star)
    &\leqslant 
 \frac{e^{5\eta}}{2\eta} \| \nabla \Phi_n(f_\star+\nu_n(g_\star), g_\star-\nu_n(g_\star))\|_{n}^2= \frac{e^{5\eta}}{2\eta} \| \nabla \Phi_n(f_\star, g_\star)\|_{n}^2\,.
\end{align*}
On the other hand, strong concavity in Lemma~\ref{lem:emp_dual_sc} gives
\begin{align*}
     \Phi_n(f_n, g_n) - \Phi_n(\bar f_\star, \bar g_\star)
  &\geqslant 
 \frac{\eta e^{-5\eta}}{2}   \|(f_n - \bar f_\star, g_n - \bar g_\star)\|_{n}^2\,.
\end{align*}
The above two displays imply
\begin{equation}
\label{eq:EB}
  \|(f_n - \bar f_\star, g_n - \bar g_\star)\|_{n} \le 
\frac{e^{5\eta}}{ \eta} \| \nabla \Phi_n(f_\star,  g_\star)\|_{n}.
\end{equation}
This inequality has been called ``error bound" in the optimization literature and can, in fact, be shown to be equivalent to a PL inequality; see~\cite[Theorem~2]{karimi2016linear}. As detailed in the next section, inequality~\eqref{eq:EB}  readily yields dimension-independent rates of estimation for the population dual potentials $(f_\star, g_\star)$. 

We conclude with a tool that becomes useful when  assessing the quality of the canonical extensions employed in this paper.
It is essentially a consequence of
the Lipschitzness
of the exponential and logarithm
on bounded intervals. Its proof is postponed to Appendix~\ref{sec:appendix_further}.

\begin{prop}[Comparison inequalities for dual
potentials and densities]\label{prop:lipschitz}
For $\mu$-almost every~$x$,
$$
 |\bar f_\star(x)  - f_n(x)| \lesssim 
\Big| \frac{1}{n} \sum_{j = 1}^n e^{-\eta \|x - Y_j\|^2 + \eta  g_\star(Y_j)} - 
   \nu(e^{-\eta \|x - \cdot\|^2 + \eta   g_\star(\cdot)} ) \Big| + \|\bar g_\star - g_n\|_{L^1(\nu_n)}\,.
$$ Similarly, for $\nu$-almost every $y$,
$$
|\bar g_\star(y) - g_n(y)| \lesssim 
\Big| \frac{1}{n} \sum_{i = 1}^n e^{-\eta \|X_i - y\|^2 + \eta   f_\star(X_i)} - 
   \mu(e^{-\eta \|\cdot - y\|^2 + \eta  f_\star(\cdot)} ) \Big| + \|\bar f_\star - f_n\|_{L^1(\mu_n)}\,.
$$ And for $(\mu \otimes \nu)$-almost every $(x, y)$,
\begin{equation}
    \label{eqn:pstar-pn}
    |p_\star(x, y) - p_n(x, y)| \lesssim 
|\bar f_\star(x)  - f_n(x)|  + |\bar g_\star(y) - g_n(y)|\,.
\end{equation}
\end{prop}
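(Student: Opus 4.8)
The plan is to reduce all three inequalities to the elementary fact that $\exp$ and $\log$ are Lipschitz on the bounded intervals supplied by Proposition~\ref{prop:bded_dual}, after rewriting the potentials through their defining sum/integral relations.

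First I would note that $(\bar f_\star, \bar g_\star)$ is obtained from $(f_\star, g_\star)$ by the translation $(f, g) \mapsto (f + c, g - c)$ with $c := \nu_n(g_\star)$, and that $|c| \le \|g_\star\|_{L^{\infty}(\nu)} \le 1$ by Proposition~\ref{prop:bded_dual}. Consequently the barred pair still satisfies the population relation~\eqref{eqn:fstar_from_gstar}: for $\mu$-almost every~$x$,
$$
e^{-\eta \bar f_\star(x)} = \int e^{-\eta\|x - y\|^2 + \eta \bar g_\star(y)}\ud\nu(y)\,,
$$
since multiplying~\eqref{eqn:fstar_from_gstar} through by $e^{-\eta c}$ simultaneously turns $e^{-\eta f_\star}$ into $e^{-\eta\bar f_\star}$ and $e^{\eta g_\star}$ into $e^{\eta\bar g_\star}$. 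Subtracting the canonical extension~\eqref{eqn:fn_canonical} of $f_n$ and splitting off the change of potential, I would write
$$
e^{-\eta \bar f_\star(x)} - e^{-\eta f_n(x)} = (\nu - \nu_n)\big(e^{-\eta\|x - \cdot\|^2 + \eta \bar g_\star(\cdot)}\big) + \nu_n\big(e^{-\eta\|x - \cdot\|^2}\,(e^{\eta \bar g_\star(\cdot)} - e^{\eta g_n(\cdot)})\big)\,.
$$
In the first term, pulling out the factor $e^{-\eta c} = e^{-\eta \nu_n(g_\star)}$ yields exactly an $\eta$-dependent constant times the empirical-process term $|\nu_n(e^{-\eta\|x - \cdot\|^2 + \eta g_\star(\cdot)}) - \nu(e^{-\eta\|x - \cdot\|^2 + \eta g_\star(\cdot)})|$ appearing on the right-hand side. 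In the second term I would bound $e^{-\eta\|x - \cdot\|^2} \le 1$ and invoke that $s \mapsto e^{\eta s}$ is Lipschitz, with an $\eta$-dependent constant, on $[-2, 2]$, which contains the ranges of $\bar g_\star$ and $g_n$ by Proposition~\ref{prop:bded_dual}; this bounds the term by $\lesssim \nu_n(|\bar g_\star - g_n|) = \|\bar g_\star - g_n\|_{L^1(\nu_n)}$. Finally, since $e^{-\eta \bar f_\star(x)}$ and $e^{-\eta f_n(x)}$ both lie in $[e^{-5\eta}, e^{5\eta}]$ by~\eqref{eqn:unif_bd}, applying the Lipschitz bound for $\log$ on that interval converts the left-hand side into $|\bar f_\star(x) - f_n(x)|$, up to an $\eta$-dependent constant. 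This gives the first inequality; the second is identical after exchanging the roles of $(\mu, f, X)$ and $(\nu, g, Y)$ and using~\eqref{eqn:gn_canonical} and~\eqref{eqn:gstar_from_fstar}.

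For~\eqref{eqn:pstar-pn}, I would use that $\bar f_\star + \bar g_\star = f_\star + g_\star$, so that $p_\star(x, y) = e^{-\eta\|x - y\|^2}\,e^{\eta \bar f_\star(x) + \eta \bar g_\star(y)}$, while $p_n(x, y) = e^{-\eta\|x - y\|^2}\,e^{\eta f_n(x) + \eta g_n(y)}$ by~\eqref{eqn:emp_decomp}. Bounding $e^{-\eta\|x - y\|^2} \le 1$, and noting that both exponents $\eta \bar f_\star(x) + \eta \bar g_\star(y)$ and $\eta f_n(x) + \eta g_n(y)$ lie in $[-4\eta, 4\eta]$ by Proposition~\ref{prop:bded_dual}, one more application of the Lipschitz estimate for $\exp$ yields $|p_\star(x, y) - p_n(x, y)| \lesssim |\bar f_\star(x) - f_n(x)| + |\bar g_\star(y) - g_n(y)|$.

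No serious obstacle is expected here; the whole argument is mechanical given Proposition~\ref{prop:bded_dual}. The only points requiring care are (i) checking that the barred potentials inherit the population Schr\"odinger relation, so that $e^{-\eta\bar f_\star}$ and $e^{-\eta f_n}$ are comparable via integrals of the \emph{same} integrand against $\nu$ and $\nu_n$ up to the harmless constant $e^{-\eta c}$, and (ii) consistently invoking Proposition~\ref{prop:bded_dual} to certify that every argument to which a Lipschitz bound for $\exp$ or $\log$ is applied stays inside a fixed compact interval depending only on $\eta$.
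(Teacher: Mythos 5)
Your proof is correct and follows essentially the same route as the paper: both hinge on the Schr\"odinger relations together with the canonical extension formula, boundedness from Proposition~\ref{prop:bded_dual}, and Lipschitz estimates for $\exp$ and $\log$ on compact intervals. The only cosmetic difference is that you decompose $e^{-\eta\bar f_\star(x)} - e^{-\eta f_n(x)}$ directly and apply the $\log$-Lipschitz bound at the end, whereas the paper introduces the intermediate potential $\tilde f_\star(x) = -\tfrac{1}{\eta}\ln\bigl(\nu_n(e^{-\eta\|x-\cdot\|^2 + \eta\bar g_\star(\cdot)})\bigr)$ and applies the triangle inequality to $|\bar f_\star - f_n|$ first, then Lipschitz bounds to each piece---these are the same two terms in a different order.
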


\subsection{Rates of convergence in expectation}

We begin with an important, albeit simple, argument to establish our convergence results. Central to our proof is the fact that we only need to measure empirical deviations at $(f_\star, g_\star)$, the optimal dual potentials. 
\begin{lemma}
\label{lem:emp_dev_gradphi}
Let $\Phi$ and $\Phi_n$ be the dual objectives defined in~\eqref{eqn:pop_dual_objective} and~\eqref{eqn:emp_dual_objective} respectively. Moreover, let $(f_\star, g_\star)$ be the pair of potentials that maximizes $\Phi$ and such that $\nu(g_\star)=0$. Then
$$
 \E\| \nabla \Phi_n(f_\star, g_\star)\|_{n}^2 \le \frac{2e^{10\eta}}{n}\,,
$$
where $\|\cdot\|_n$ denotes the norm of  $L^2(\mu_n) \times L^2(\nu_n)$.
\end{lemma}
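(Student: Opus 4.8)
The plan is to compute directly from the closed-form expression \eqref{eq:norm_gradphin} for the squared norm of the gradient. Evaluating it at the population potentials $(f_\star, g_\star)$, so that the density $p$ appearing there is $p_\star$, we obtain
\[
\|\nabla\Phi_n(f_\star,g_\star)\|_n^2 = \frac1n\sum_{i=1}^n\Big(1 - \frac1n\sum_{j=1}^n p_\star(X_i,Y_j)\Big)^2 + \frac1n\sum_{j=1}^n\Big(1 - \frac1n\sum_{i=1}^n p_\star(X_i,Y_j)\Big)^2\,.
\]
I will bound the expectation of the two terms separately; by the symmetry between the marginals $\mu$ and $\nu$ they are handled in the same way.

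For the first term, condition on $\X$. The population marginal constraint \eqref{eqn:pop_marg} gives $\nu(p_\star(x,\cdot)) = 1$ for $\mu$-almost every $x$, hence $\nu(p_\star(X_i,\cdot)) = 1$ almost surely. Since $\Y$ is independent of $\X$, conditionally on $\X$ the random variables $p_\star(X_i,Y_1),\dots,p_\star(X_i,Y_n)$ are i.i.d.\ with mean $1$, so $1 - \frac1n\sum_{j} p_\star(X_i,Y_j)$ is a centered average and
\[
\E\Big[\Big(1 - \frac1n\sum_{j=1}^n p_\star(X_i,Y_j)\Big)^2 \,\Big|\, \X\Big] = \frac1n\,\mathrm{Var}\big(p_\star(X_i,Y_1)\,\big|\,\X\big) \le \frac1n\,\E\big[p_\star(X_i,Y_1)^2\,\big|\,\X\big] \le \frac{e^{10\eta}}{n}\,,
\]
where the final bound uses the uniform estimate $p_\star \le e^{5\eta}$ from \eqref{eqn:unif_bd}. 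Averaging over $i \in [n]$ and taking total expectation shows that the expectation of the first term is at most $e^{10\eta}/n$.

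For the second term, I would repeat the argument with the roles of $x$ and $y$ exchanged: conditioning on $\Y$, the marginal constraint $\mu(p_\star(\cdot,y)) = 1$ from \eqref{eqn:pop_marg} makes $1 - \frac1n\sum_{i} p_\star(X_i,Y_j)$ a centered i.i.d.\ average, whose conditional second moment is again at most $e^{10\eta}/n$ by \eqref{eqn:unif_bd}. Adding the two bounds yields $\E\|\nabla\Phi_n(f_\star,g_\star)\|_n^2 \le 2e^{10\eta}/n$. There is no real obstacle here: the lemma is ultimately a one-line variance computation, and the only points requiring care are the conditioning/independence bookkeeping---using that $\X$ and $\Y$ are mutually independent, so that conditioning on one sample leaves the other i.i.d.---and the fact that the population marginal constraints, which hold only $\mu$- and $\nu$-almost everywhere, may legitimately be applied at the sample points since $X_i \sim \mu$ and $Y_j \sim \nu$.
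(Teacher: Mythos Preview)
Your proof is correct and follows essentially the same approach as the paper. The paper expands the same squared norm into a double sum of covariances and uses the marginal constraints~\eqref{eqn:pop_marg} to kill the off-diagonal terms, arriving at $\frac{2}{n}\mathsf{Var}(p_\star(X_1,Y_1))\le \frac{2e^{10\eta}}{n}$; your conditioning argument is just a slightly different bookkeeping of the same variance computation.
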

\begin{proof}
Recall that $p_\star$ is defined in~\eqref{eqn:pop_scaling} so by \eqref{eq:norm_gradphin} we get 
\begin{align*}
    \E\| \nabla& \Phi_n(f_\star, g_\star)\|_{n}^2 
    =\frac1n\sum_{i=1}^n \E\Big[1-\frac1n\sum_{j=1}^n p_\star(X_i,Y_j)\Big]^2+ \frac1n\sum_{j=1}^n \E\Big[1-\frac1n\sum_{i=1}^n p_\star(X_i,Y_j)\Big]^2\\
    &=\frac{1}{n^2} \Big(\sum_{j,l=1}^n\mathsf{Cov} \left[1-p_\star(X_1,Y_j),1-p_\star(X_1,Y_l)\right]+ \sum_{i,k=1}^n\mathsf{Cov} \left[1-p_\star(X_i,Y_1),1-p_\star(X_k,Y_1)\right]\Big)\,,
\end{align*}
where $X_i \sim \mu$, $Y_j \sim \nu$ with $X_1, \ldots, X_n$ mutually independent, and $Y_1, \ldots, Y_n$ mutually independent. In particular, in light of the marginal constraints listed in~\eqref{eqn:pop_marg}, it holds that 
$$
 \mathsf{Cov} \left[1-p_\star(X_i,Y_j),1-p_\star(X_k,Y_l) \right]=0\,,\quad \forall \  (i,j)\neq (k,l)\,.
$$
As a result,
\begin{align*}
    \E\| \nabla& \Phi_n(f_\star, g_\star)\|_{n}^2 =\frac{2}{n}\mathsf{Var} \left[1-p_\star(X_1,Y_1)\right]=\frac2{n} \mathsf{Var} \left(p_\star(X_1,Y_1) \right)\le \frac{2e^{10\eta}}{n}\,,
\end{align*}
where in the inequality, we used~\eqref{eqn:unif_bd}. 
\end{proof}

\paragraph*{Bias of cost.}
We here give the proof of the bias inequality part of Theorem~\ref{thm:sample_complexity_cost}.
Recall that the population and empirical cost are denoted, respectively,
$$
S := \pi_\star(\|x - y\|^2) + \frac{1}{\eta}\KL(\pi_\star \| \mu \otimes \nu)\, , \quad 
\quad S_n := \pi_n(\|x - y\|^2) + \frac{1}{\eta} \KL(\pi_n \|\mu_n \otimes \nu_n)\, .
$$
Our goal is to establish the bias bound of Theorem~\ref{thm:sample_complexity_cost}: $|\E[S_n] - S| \lesssim 1/n$.
To that end, observe that
$$
S_n - S = \Phi_n(f_n,g_n) - \Phi(f_\star, g_\star) = \{ \Phi_n(f_n, g_n)
- \Phi_n(f_\star, g_\star) \} + \{ \Phi_n(f_\star, g_\star) - \Phi(f_\star, g_\star)\}\,.
$$ 
Since $\E[\Phi_n]=\Phi$, we get
\begin{equation}\label{eqn:upperexpect}
\E[S_n] - S = \E[\Phi_n(f_n, g_n) -
\Phi_n(f_\star, g_\star)]\,.
\end{equation} Using this equation and optimality of $(f_n, g_n)$,
we can conclude $\E[S_n] \geqslant S$; $S_n$ has a non-negative bias.
Thus, it suffices to focus on upper-bounding the right-hand
side of~\eqref{eqn:upperexpect}. 
To do this, recall first that in light of Proposition~\ref{prop:bded_dual}, we have that $(f_\star+\nu_n(g_\star),g_\star-\nu_n(g_\star)) \in S_2$ almost surely. Therefore, the PL inequality in Proposition~\ref{prop:emp_objective_PL} yields
\begin{align}
    \Phi_n(f_n,g_n)-\Phi_n(f_\star, g_\star)&=  \Phi_n(f_n,g_n)-\Phi_n(f_\star+\nu_n(g_\star), g_\star-\nu_n(g_\star))\\
    &\leqslant 
 \frac{ e^{5\eta}}{2\eta}\| \nabla \Phi_n(f_\star+\nu_n(g_\star), g_\star-\nu_n(g_\star))\|_{L^2(\mu_n \otimes \nu_n)}^2\nonumber\\
&= \frac{ e^{5\eta}}{2\eta} \| \nabla \Phi_n(f_\star, g_\star)\|_{L^2(\mu_n \otimes \nu_n)}^2\, .\label{eq:boundgrad1}
\end{align}
Applying Lemma~\ref{lem:emp_dev_gradphi}, we get
$$
\E [\Phi_n(f_n,g_n)-\Phi_n(f_\star, g_\star)]
\leqslant \frac{e^{15\eta}}{\eta}\cdot \frac{1}{n}\,.
$$

\paragraph*{Squared error of cost.}
We now show how to augment the above proof technique
to yield the squared error bound in Theorem~\ref{thm:sample_complexity_cost}.
Note that from here on, we suppress
constants depending only on $\eta$
as explained in the notation section.
Begin by applying Young's inequality to
see
\begin{equation}\label{eqn:squared_error_bd}
    \E|S_n - S|^2 \leqslant 2\E|\Phi_n(f_n, g_n)
    - \Phi_n(f_\star, g_\star)|^2 + 
    2\E|\Phi_n(f_\star, g_\star) - \Phi(f_\star, g_\star)|^2\,.
\end{equation}
For the first term, 
observe that $\Phi_n(f_n, g_n) \geqslant \Phi_n(f_\star, g_\star)$. Since the square function is monotonic on the positive real line, we can apply Proposition~\ref{prop:emp_objective_PL} to yield
$$
\E|\Phi_n(f_n, g_n)
    - \Phi_n(f_\star, g_\star)|^2
\lesssim \E \|\nabla \Phi_n(f_\star, g_\star)\|^4_n\,.
$$
    
We observe that by Proposition~\ref{prop:bded_dual}
and~\eqref{eq:norm_gradphin}, $\|\nabla \Phi_n(f_\star, g_\star)\|_n \lesssim 1$ almost surely.
Thus, we can apply Lemma~\ref{lem:emp_dev_gradphi} to conclude
$$
\E|\Phi_n(f_n, g_n)
    - \Phi_n(f_\star, g_\star)|^2
\lesssim \E \|\nabla \Phi_n(f_\star, g_\star)\|^4_n\lesssim 
\E \|\nabla\Phi_n(f_\star, g_\star)\|_n^2 \lesssim \frac{1}{n}\,.
$$
For the second term in~\eqref{eqn:squared_error_bd}, we observe that
\begin{align*}
\E|\Phi_n(f_\star, g_\star) - \Phi(f_\star, g_\star)|^2
&\leqslant 4 \E|(\mu_n - \mu)(f_\star)|^2
+ 4 \E|(\nu_n - \nu)(g_\star)|^2 \\
&\qquad  \quad+\frac{4}{\eta^2}
\E|(\mu_n \otimes \nu_n - \mu \otimes \nu)(p_\star - 1)|^2 \\
&\lesssim \frac{1}{n} + \E|(\mu_n \otimes \nu_n - \mu \otimes \nu)(p_\star - 1)|^2\,,
\end{align*} where we use the uniform boundedness from
Proposition~\ref{prop:bded_dual}.
We observe that by the marginal constraints~\eqref{eqn:pop_marg}, Jensen's inequality, and~\eqref{eq:norm_gradphin},
$$
|(\mu_n \otimes \nu_n - \mu \otimes \nu)(p_\star - 1)|^2
= |(\mu_n \otimes \nu_n)(p_\star - 1)|^2
\leqslant \frac{1}{n} \sum_{i = 1}^n \Big( \frac{1}{n} \sum_{j = 1}^n
p_\star(X_i, Y_j) - 1\Big)^2 \leqslant \|\nabla \Phi_n(f_\star, g_\star)\|_n^2\,.
$$ Applying Lemma~\ref{lem:emp_dev_gradphi}
once more, we conclude that
$$
\E|(\mu_n \otimes \nu_n - \mu \otimes \nu)(p_\star - 1)|^2
\lesssim \frac{1}{n}\,.
$$ This yields the result.

\paragraph*{Map.}
We explain here how to prove Theorem~\ref{thm:sample_complexity_map}.
Recall that we use the notation
\begin{align*}
    b_\star(x) &= \E_{\pi_{\star}}[Y \, | \, X=x]=\int yp_\star(x, y) \ud \nu(y)\,,\\
 b_n(x) &= \E_{\pi_n}[Y \, | \, X=x]=\frac{1}{n} \sum_{j = 1}^n Y_j p_n(x, Y_j)\,.
\end{align*}
It holds
\begin{align}
\|b_\star - b_n \|^2_{L^2(\mu)} 
&\le 2 \Big\|b_\star - \frac{1}{n} \sum_{j = 1}^n
Y_j p_\star(\cdot , Y_j)\Big\|^2_{L^2(\mu)}
+ 2\Big\| \frac{1}{n} \sum_{j = 1}^n
Y_j p_\star(\cdot, Y_j) - b_n\Big\|^2_{L^2(\mu)} \nonumber \\
&\le 2\Big\|b_\star - \frac{1}{n} \sum_{j = 1}^n
Y_j p_\star(\cdot, Y_j)\Big\|^2_{L^2(\mu)}
+ \frac{1}{2n} \sum_{j = 1}^n\|p_n(\cdot,
Y_j)- p_\star(\cdot, Y_j)\|_{L^2(\mu)}^2\,,\label{eqn:pop_bary_bd}
\end{align}
where in the second inequality, we used Jensen's inequality together with the bound $\|Y_j\|\le 1/2$.
The first term is straightforward to control in
expectation:
\begin{align*}
\E \Big\|b_\star - \frac{1}{n} \sum_{j = 1}^n
Y_j p_\star(\cdot, Y_j)\Big\|^2_{L^2(\mu)} 
&= \frac{1}{n}\int\|yp_\star(x, y) - b_\star(x)\|^2 \ud \mu(x) \ud \nu (y)
\lesssim \frac{1}{n}\,,
\end{align*} where the last inequality follows by once again
applying
Proposition~\ref{prop:bded_dual}.
For the second term in~\eqref{eqn:pop_bary_bd},
observe that Proposition~\ref{prop:lipschitz} and Young's inequality implies
$$
\frac{1}{2n} \sum_{j = 1}^n\|p_n(\cdot,
Y_j)- p_\star(\cdot, Y_j)\|_{L^2(\mu)}^2
\lesssim \|\bar f_\star - f_n\|_{L^2(\mu)}^2 + \|\bar g_\star - g_n\|_{L^2(\nu_n)}^2\,.
$$
The next Lemma is useful in this
proof and the subsequent proof for the density.
It is immediate from Propositions~\ref{prop:bded_dual} and~\ref{prop:lipschitz}.
\begin{lemma}\label{lem:pop_to_emp_norms_expectation}We have
$$
\E\|f_n - \bar f_\star\|^2_{L^2(\mu)}
\lesssim \E\|g_n - \bar g_\star \|_{L^2(\nu_n)}^2 + \frac{1}{n}\,,
$$ and 
$$
\E\|g_n - \bar g_\star\|^2_{L^2(\nu)}
\lesssim \E\|f_n - \bar f_\star \|_{L^2(\mu_n)}^2 + \frac{1}{n}\,.
$$
\end{lemma}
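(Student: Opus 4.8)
The plan is to derive both bounds directly from the comparison estimate of Proposition~\ref{prop:lipschitz}, using nothing more than the uniform boundedness supplied by Proposition~\ref{prop:bded_dual} together with a one-line variance computation. I would treat the first inequality in detail; the second follows from the identical argument with the roles of $(\mu, f, X)$ and $(\nu, g, Y)$ interchanged, using the second display of Proposition~\ref{prop:lipschitz} in place of the first.

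First I would invoke the first display of Proposition~\ref{prop:lipschitz} and square it, using $(a+b)^2 \le 2a^2 + 2b^2$, to get for $\mu$-almost every $x$
$$
|f_n(x) - \bar f_\star(x)|^2 \lesssim \Big| \frac{1}{n} \sum_{j=1}^n e^{-\eta \|x - Y_j\|^2 + \eta g_\star(Y_j)} - \nu\big( e^{-\eta \|x - \cdot\|^2 + \eta g_\star(\cdot)} \big) \Big|^2 + \|\bar g_\star - g_n\|_{L^1(\nu_n)}^2 \,.
$$
Integrating against $\mu$ and taking expectations (Fubini applies since both sides are nonnegative), the last term contributes at most $\E \|\bar g_\star - g_n\|_{L^1(\nu_n)}^2 \le \E \|\bar g_\star - g_n\|_{L^2(\nu_n)}^2$ by Jensen's inequality on the probability space $\nu_n$, which is exactly the first term on the right-hand side of the claim; note it does not depend on $x$, so the integration over $\mu$ leaves it untouched.

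It then remains to bound the expectation over the sample of $\int \big| \frac1n \sum_j e^{-\eta\|x - Y_j\|^2 + \eta g_\star(Y_j)} - \nu(e^{-\eta\|x-\cdot\|^2 + \eta g_\star(\cdot)}) \big|^2 \ud\mu(x)$. Here I would use that $g_\star$ is \emph{deterministic} and $Y_1, \ldots, Y_n$ are i.i.d.\ from $\nu$, so for each fixed $x$ the bracketed quantity is an average of $n$ i.i.d.\ mean-zero variables, and its expected square equals $\frac1n \mathsf{Var}_{Y \sim \nu}\big( e^{-\eta\|x - Y\|^2 + \eta g_\star(Y)} \big)$. By Assumption~\ref{assump:bded_support} (which forces $\|x - Y\|^2 \le 1$) and the bound $\|g_\star\|_{L^\infty(\nu)} \le 1$ from Proposition~\ref{prop:bded_dual}, the exponent lies in $[-2\eta, \eta]$, so this variance is $\lesssim 1$ uniformly in $x$; integrating over $x$ gives a contribution $\lesssim 1/n$. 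Adding the two pieces yields the first inequality, and the second is obtained verbatim. There is no real obstacle here; the only point worth a moment's care is that this ``Monte Carlo'' term genuinely features the deterministic potential $g_\star$, not $g_n$, so it is an average of independent bounded summands rather than a complicated functional of the whole sample — this is precisely why the crude variance bound suffices.
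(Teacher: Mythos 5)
Your proof is correct and follows the same route the paper intends: the lemma is stated as ``immediate from Propositions~\ref{prop:bded_dual} and~\ref{prop:lipschitz},'' and you have simply spelled out the two steps that make it so, namely squaring the pointwise comparison inequality and bounding the Monte Carlo term by a direct i.i.d.\ variance computation using the uniform boundedness of $g_\star$. (The high-probability analog, Lemma~\ref{lem:extended_control}, does the same thing but invokes the bounded-differences inequality in place of your one-line variance bound, which is the natural substitution when only an expectation bound is needed.)
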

Applying Lemma~\ref{lem:pop_to_emp_norms_expectation} yields
$$
\E \|b_\star - b_n \|^2_{L^2(\mu)} \lesssim
\E\|\bar g_\star - g_n\|^2_{L^2(\nu_n)} + \frac{1}{n}\,.
$$ 
To conclude, we combine~\eqref{eq:EB} together with Lemma~\ref{lem:emp_dev_gradphi} to get the desired bound on $\E\|\bar g_\star - g_n\|^2_{L^2(\nu_n)}$.

\paragraph*{Density.}

Proposition~\ref{prop:lipschitz} and Young's inequality implies
$$
\|p_n - p_\star\|_{L^2(\mu \otimes \nu)}^2\lesssim \|f_n-\bar f_\star\|^2_{L^2(\mu)}+\|g_n-\bar g_\star\|^2_{L^2(\nu)}\,.
$$
Hence by Lemma~\ref{lem:pop_to_emp_norms_expectation},
$$
\E\|p_n - p_\star\|_{L^2(\mu \otimes \nu)}^2\lesssim \E \big[
\|f_n - \bar f_\star \|_{L^2(\mu_n)}^2 + \|g_n - \bar g_\star \|_{L^2(\nu_n)}^2 \big]
+ \frac{1}{n}\,.
$$

 To conclude, we combine again~\eqref{eq:EB} together with Lemma~\ref{lem:emp_dev_gradphi}.

\paragraph*{Extending to the full results.}
To give the results in probability, we
replace each use of expectations with appropriate concentration
inequalities in the above proofs. Because the main ideas
are all present in the proofs in expectation, we leave these
extensions to Appendix~\ref{sec:appendix_sample_complexity}.

\subsection{Fluctuations of bounded test functions}\label{subsec:main_conv}

We now give the proof of our main result on
the difference $(\pi_n - \pi_\star)(\phi)$ where $\phi$ is
a bounded test function. Recall that
Theorem~\ref{thm:main_conv} says that, for all $t > 0$, with probability at least $1 - 8e^{-t^2}$,
$$
|(\pi_\star - \pi_n)(\phi)| \lesssim \frac{\|\phi\|_{L^{\infty}(\mu \otimes \nu)}\cdot t}{\sqrt{n}}\,.
$$
To begin, we assume
without loss of generality that
$\|\phi\|_{L^{\infty}(\mu \otimes \nu)} = 1$.
We break up the bound into two terms,
and use the uniform boundedness
of $\phi$ to yield
\begin{align*}
    |(\pi_n - \pi_\star)(\phi)|  &\leqslant
    \frac{1}{n^2} \sum_{i, j = 1}^n |p_n(X_i, Y_j) - p_\star(X_i, Y_j)|
     + \frac{1}{n^2} \Big| \sum_{i, j = 1}^n (p_\star(X_i, Y_j)\phi(X_i, Y_j) 
    - \pi_\star(\phi))\Big|\,.
\end{align*} The first term can be controlled using the results
from the previous section. Specifically, Theorem~\ref{thm:sample_complexity_density}
implies that with probability at least $1 - 16e^{-t^2}$,
$$
    \frac{1}{n^2} \sum_{i, j = 1}^n |p_n(X_i, Y_j) - p_\star(X_i, Y_j)|
  \lesssim \frac{t}{\sqrt{n}}\,.
$$
For the second term above, we can rewrite it as
$$
\frac{1}{n^2} \sum_{i, j = 1}^n (p_\star(X_i, Y_j)\phi(X_i, Y_j) 
    - \pi_\star(\phi)) = (\mu_n \otimes \nu_n - \mu \otimes \nu)(
    p_\star \phi)\,.
$$ Thus, this second term
resembles an empirical sample from $\mu \otimes \nu$,
except that we have joined together samples from $\mu$ and $\nu$.
This difference is, however, benign. Using a
trick from $U$-statistics~\cite{Hoe63},
we can, in fact, control the size
of the above process by that of a {\it bona fide} sample from $\mu \otimes \nu$
of size $n$.
This technique is encapsulated in the following Lemma.

\begin{lemma}\label{lem:u_statistics_trick}
Suppose $a \in L^{\infty}(\mu \otimes \nu)$ is such that
$(\mu \otimes \nu)(a) = 0$. Then, for all $t > 0$, with probability
at least $1 - 2e^{-t}$ over $\X$ and $\Y$,
$$
|(\mu_n \otimes \nu_n)(a)| \leqslant  \sqrt{\frac{2t}{n}} \cdot  \|a\|_{L^{\infty}(\mu \otimes \nu)} 
\,.
$$
\end{lemma}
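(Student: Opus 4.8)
The plan is to reduce the statement about $(\mu_n\otimes\nu_n)(a)$ — which is a ``mixed'' double average over two independent samples — to a standard concentration bound for an average of $n$ i.i.d.\ bounded random variables, using the classical observation of Hoeffding that a $U$-statistic (or here a bilinear statistic) can be written as an average over ``block-diagonal'' copies. First I would write
\[
(\mu_n\otimes\nu_n)(a) = \frac{1}{n^2}\sum_{i,j=1}^n a(X_i,Y_j),
\]
and recall that the full sum over the $n\times n$ grid can be decomposed into $n$ groups of $n$ terms, each group being a sum along a ``generalized diagonal'': for each permutation $\sigma\in\Sigma_n$, the quantity $\frac1n\sum_{i=1}^n a(X_i,Y_{\sigma(i)})$ is an average of $n$ \emph{independent} random variables (independent because the indices $i$ are distinct and $\sigma(i)$ are distinct, and $\X,\Y$ are mutually independent), each bounded in absolute value by $\|a\|_{L^\infty(\mu\otimes\nu)}$ and with mean $(\mu\otimes\nu)(a)=0$. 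Averaging these diagonal averages over, say, the $n$ cyclic shifts $\sigma_k(i) = i+k \bmod n$, $k\in[n]$, recovers exactly $(\mu_n\otimes\nu_n)(a)$:
\[
(\mu_n\otimes\nu_n)(a) = \frac1n\sum_{k=1}^n \Big(\frac1n\sum_{i=1}^n a(X_i,Y_{\sigma_k(i)})\Big).
\]

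Next I would apply Hoeffding's inequality to a \emph{single} diagonal average. Fix any one permutation, say $\sigma = \mathrm{id}$: the variables $a(X_1,Y_1),\dots,a(X_n,Y_n)$ are i.i.d., mean zero, bounded in $[-\|a\|_\infty,\|a\|_\infty]$, so by Hoeffding's inequality, for every $t>0$, with probability at least $1-2e^{-t}$,
\[
\Big|\frac1n\sum_{i=1}^n a(X_i,Y_i)\Big| \le \sqrt{\frac{2t}{n}}\,\|a\|_{L^\infty(\mu\otimes\nu)}.
\]
The key point is that this bound is \emph{identical} in distribution for every one of the $n$ cyclic diagonal averages (each is an i.i.d.\ average of $n$ terms with the same marginal law), so there is no union bound to pay: the event ``the $k$-th diagonal average exceeds $\sqrt{2t/n}\,\|a\|_\infty$'' has probability at most $2e^{-t}$ for each $k$, but I do not need them all to hold simultaneously. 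Instead I use convexity of the absolute value: since $(\mu_n\otimes\nu_n)(a)$ is the \emph{average} of the $n$ diagonal averages, if \emph{every} diagonal average has modulus $\le \sqrt{2t/n}\,\|a\|_\infty$ then so does the overall average. Unfortunately ``every'' would require a union bound over $n$ events, giving $1-2ne^{-t}$, not the claimed $1-2e^{-t}$. So the cleaner route is: work at the level of the moment generating function / exponential moments directly.

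The sharper argument, which I expect to be the crux, avoids the union bound by exploiting convexity \emph{inside} the exponential. For any $\lambda\in\R$, by Jensen's inequality applied to the convex function $z\mapsto e^{\lambda z}$ and the average-over-$k$ representation,
\[
\E\exp\!\Big(\lambda\,(\mu_n\otimes\nu_n)(a)\Big)
\le \frac1n\sum_{k=1}^n \E\exp\!\Big(\frac{\lambda}{n}\sum_{i=1}^n a(X_i,Y_{\sigma_k(i)})\Big)
= \E\exp\!\Big(\frac{\lambda}{n}\sum_{i=1}^n a(X_i,Y_i)\Big),
\]
the last equality because each cyclic diagonal sum has the same distribution as the identity-diagonal sum. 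The right-hand side is the MGF of an i.i.d.\ sum of mean-zero variables bounded by $\|a\|_\infty$, which by Hoeffding's lemma is at most $\exp(\lambda^2\|a\|_\infty^2/(2n))$. Thus $(\mu_n\otimes\nu_n)(a)$ is sub-Gaussian with variance proxy $\|a\|_\infty^2/n$, and the standard Chernoff bound gives, for all $t>0$, $\Pr(|(\mu_n\otimes\nu_n)(a)| > \sqrt{2t/n}\,\|a\|_\infty) \le 2e^{-t}$, which is exactly the claim. The main obstacle — and the one genuinely clever step — is recognizing that the Jensen/convexity move must be made at the level of the exponential moment (the MGF), not at the level of the tail event, so that the $n$ diagonal copies collapse to a single one without any loss of an $n$ factor in the probability.
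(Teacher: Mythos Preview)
Your proof is correct and follows essentially the same route as the paper: rewrite $(\mu_n\otimes\nu_n)(a)$ as an average of diagonal sums indexed by permutations, apply Jensen's inequality inside the MGF, observe that each diagonal sum is distributed as an i.i.d.\ average from $\mu\otimes\nu$, and conclude via Hoeffding's lemma and the Chernoff bound. The only cosmetic difference is that the paper averages over all $n!$ permutations while you use the $n$ cyclic shifts; both give the identical MGF bound, and your choice is if anything more economical.
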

\begin{proof}
Apply Chernoff's bound to see that for any $\lambda > 0$,
\begin{align*}
    \P_{\X, \Y}\big[ (\mu_n \otimes \nu_n)(a)> t \big]
    &\leqslant e^{-\lambda t}\E_{\X, \Y}\big[ \exp\big\{\lambda (\mu_n \otimes \nu_n)(a) \big\}\big]\,.
\end{align*}
We can now apply the key idea from Hoeffding's paper~\cite{Hoe63}.
By counting terms, we observe that we can rewrite $(\mu_n \otimes \nu_n)(a)$
as
$$
(\mu_n \otimes \nu_n)(a) = \frac{1}{n!}\sum_{\sigma \in \Sigma_n}
\frac{1}{n} \sum_{k = 1}^n a(X_k, Y_{\sigma(k)})\,,
$$ where $\Sigma_n$ is the set of permutations on $n$ elements.
Combining this observation with Jensen's inequality yields the bound
\begin{align*}
    \P_{\X, \Y}\big[  (\mu_n \otimes \nu_n)(a) > t \big]
    &\leqslant e^{-\lambda t}\E_{\X, \Y}\Big[ \exp\Big\{\frac{\lambda}{n!}
    \sum_{\sigma \in \Sigma_n}\frac{1}{n}
    \sum_{k = 1}^n a(X_k, Y_{\sigma(k)}) \Big\}\Big] \\
    &\leqslant e^{-\lambda t}
    \E_{\X, \Y} \Big[ \frac{1}{n!}\sum_{\sigma \in \Sigma_n}
    \exp\Big\{\frac{\lambda}{n}
    \sum_{k = 1}^n a(X_k, Y_{\sigma(k)}) \Big\}\Big]\,.
\end{align*} Now, we observe that for any fixed permutation $\sigma$,
the joint law of
$(X_1, Y_{\sigma(1)}), \ldots , (X_n, Y_{\sigma(n)})$,
is identical to that of $z_1, \ldots, z_n$ where $z_k \sim \mu \otimes \nu$
are independent and identically distributed. 
Let $\Z$ denote such an iid sample $(z_1, \ldots, z_n)$.
Thus we can change the order of summation to yield
\begin{align*}
    \E_{\X, \Y} \Big[ \frac{1}{n!}\sum_{\sigma \in \Sigma_n}
    \exp\Big\{\frac{\lambda}{n}
    \sum_{k = 1}^n a(X_k, Y_{\sigma(k)}) \Big\}\Big]
    &=\frac{1}{n!}\sum_{\sigma \in \Sigma_n}
     \E_{\X, \Y} \Big[ 
    \exp\Big\{\frac{\lambda}{n}
    \sum_{k = 1}^n a(X_k, Y_{\sigma(k)}) \Big\}\Big] \\
    &= \E_{\Z} \Big[ 
    \exp\Big\{\frac{\lambda}{n}
    \sum_{k = 1}^n a(z_k) \Big\}\Big]\,.
\end{align*}
Applying Hoeffding's Lemma and optimizing over $\lambda$
we find
$$
    \P_{\X, \Y}\big[(\mu_n \otimes \nu_n)(a)> t \big] 
    \leqslant \exp \Big\{-\frac{nt^2}{2\|a\|_{L^{\infty}(\mu \otimes \nu)}^2}
    \Big\}\, .
$$ The analogous argument works for the other tails. Combining these
bounds yields the result.
\end{proof}
With this Lemma~\ref{lem:u_statistics_trick} in hand, we can conclude the proof
of Theorem~\ref{thm:main_conv}
using the uniform boundedness of $p_\star$
from Proposition~\ref{prop:bded_dual}.

\subsection{Proofs for transfer learning}
\label{sec:pr_transfer}

\paragraph{Proof of Theorem~\ref{thm:sample_complexity_npls} for transfer regression.}
Observe
\begin{align*}
    \|h_\star - h_n\|_{L^2(\mu)}^2
    &\leqslant 2\big\|h_\star - \frac{1}{n} \sum_{j = 1}^n A_j p_\star(\cdot, Y_j)\big\|^2_{L^2(\mu)}
    + 2 \big\|\frac{1}{n} \sum_{j = 1}^n A_j (p_\star(\cdot, Y_j)
    - p_n(\cdot, Y_j)) \big\|_{L^2(\mu)}^2\\
    &\leqslant 2\big\|h_\star - \frac{1}{n} \sum_{j = 1}^n A_j p_\star(\cdot, Y_j)\big\|^2_{L^2(\mu)}
    + \frac{2}{n} \sum_{j = 1}^n \|p_\star(\cdot, Y_j)
    - p_n(\cdot, Y_j) \|^2_{L^2(\mu)}\,.
\end{align*} The result in expectation then
follows as in the proof of Theorem~\ref{thm:sample_complexity_map}
once we observe that
$$
 \E \big\|h_\star - \frac{1}{n} \sum_{j = 1}^n A_j p_\star(\cdot, Y_j)\big\|^2_{L^2(\mu)} = \frac{1}{n}
\E \|A_1 p_\star(\cdot, Y_1) - h_\star\|_{L^2(\mu)}^2
 \lesssim \frac{1}{n}\,.
 $$ 
 
\paragraph{Proof of Theorem~\ref{thm:classif} for transfer classification.}
We start essentially as in the proof of Theorem~\ref{thm:sample_complexity_npls} without integration. Applying the triangle inequality and Jensen's inequality, we get that for any $x$, it holds
\begin{align}\label{eqn:classification_main}
    |h_n(x)-h_\star(x)| \le \big|\frac1n \sum_{j=1}^n A_j p_\star(x, Y_j) - \E_\omega[Ap_\star(x,Y)]\big| + \frac1n\sum_{j=1}^n|p_\star(x,Y_j) -  p_n(x, Y_j)|\, .
\end{align}
To control the first term, we use the boundedness
from Proposition~\ref{prop:bded_dual} to apply Hoeffding's inequality to get that with probability $1-2e^{-t}$ it holds for any $x$ that
$$
\big|\frac1n \sum_{j=1}^n A_j p_\star(x, Y_j) - \E_\omega[Ap_\star(x,Y)]\big| \lesssim \sqrt{\frac{t}{n}}.
$$
For the second term, recall that~\eqref{eqn:pstar-pn} yields
$$
\frac{1}{n} \sum_{j = 1}^n |p_\star(x, Y_j) - p_n(x, Y_j)|
\lesssim |\bar f_\star(x) - f_n(x)| +\|\bar g_\star - g_n \|_{L^1(\nu_n)}.
$$ 
Moreover, controlling first term using Proposition~\ref{prop:lipschitz} yields
$$
\frac{1}{n} \sum_{j = 1}^n |p_\star(x, Y_j) - p_n(x, Y_j)|
\lesssim 
\big| \frac{1}{n} \sum_{j = 1}^ne^{-\eta\|x - Y_j\|^2 + \eta  g_\star(Y_j)}
- \nu( e^{-\eta\|x - \cdot\|^2 + \eta  g_\star(\cdot)})\big|  +\|\bar g_\star - g_n \|_{L^1(\nu_n)}\,.
$$
Using Hoeffding's inequality, we get that
with probability at least $1 - 2e^{-t}$,
$$
\big| \frac{1}{n} \sum_{j = 1}^ne^{-\eta\|x - Y_j\|^2 + \eta  g_\star(Y_j)}
- \nu( e^{-\eta\|x - \cdot\|^2 + \eta  g_\star(\cdot)})\big| 
\lesssim \sqrt{\frac{t}{n}} \, .
$$
Moreover, Equation~\eqref{eq:EB} together with Lemma~\ref{lem:eps_sub_g}
imply that with probability at least $1 - 4e^{-t}$,
$$
  \|\bar g_\star - g_n\|_{L^1(\mu_n)} \le \|\bar g_\star - g_n\|_{L^2(\mu_n)} \lesssim \sqrt{\frac{t}{n}} \, .
$$ 
Collecting terms, we get that for any $x$,
with probability
at least $1 - 8e^{-t}$,
\begin{equation}
    \label{eqn:devbdclassif}
    |h_n(x)-h_\star(x)| \lesssim \sqrt{\frac{t}{n}}\,.
\end{equation}
We are now in a position to apply Lemma~3.1 in~\cite{AudTsy07} which implies that under the Mammen-Tsybakov noise condition above, the pointwise deviation inequality~\eqref{eqn:devbdclassif} readily yields the desired bound on the excess-risk of the plug-in classifier.

\medskip
\paragraph*{Acknowledgements.} The authors gratefully acknowledge
the Simons Institute for the Theory of Computing semester program
``Geometric Methods in Optimization and Sampling", where part
of this work was carried out. 
Philippe Rigollet 
was supported by NSF awards IIS-1838071, DMS-2022448, and CCF-2106377.
Austin J. Stromme was partially supported by the Department of Defense (DoD) through the National Defense
Science \& Engineering Graduate Fellowship (NDSEG) Program.

\appendix 

\section{Omitted proofs for sample complexity}
\label{sec:appendix_sample_complexity}
In this section we give the proofs of
the high probability
statements in Theorems~\ref{thm:sample_complexity_cost},~\ref{thm:sample_complexity_map},
and~\ref{thm:sample_complexity_density}.
In section~\ref{subsec:key_tail}, we state and prove our main
tail bound.
In section~\ref{subsec:extended_control}
we state and prove the lemma comparing
the extended dual potentials to their non-extended versions.
The theorems are then proved in sections~\ref{subsec:proof_costs_full},
~\ref{subsec:proof_maps_full}, and~\ref{subsec:proof_densities_full},
respectively.

\subsection{The tail bound}\label{subsec:key_tail}
To provide results with control on the tails, we
frequently use the following direct consequence
of the bounded differences inequality.
% ; see e.g.
% Problem 3.9 in~\cite{Han17}.

\begin{lemma}\label{lem:bounded_differences}
Suppose $Z_1, \ldots, Z_m$ are independent mean-zero random variables
taking values in a Hilbert space $(H, \|\cdot\|_H)$.
Suppose there is some $C> 0$ such that for each $k = 1, \ldots, m$,
$\|Z_k\|_H \leqslant C$. Then, for all $t > 0$,
with probability at least $1 - 2e^{-t}$,
$$
\Big\| \frac{1}{m}\sum_{k = 1}^m Z_k \Big\|_H^2 \leqslant \frac{8C^2t}{m}\,.
$$
\end{lemma}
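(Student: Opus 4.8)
The plan is to apply the bounded differences (McDiarmid) inequality to the real-valued function $F(z_1, \dots, z_m) = \| \frac{1}{m} \sum_{k=1}^m z_k \|_H$. First I would verify the bounded differences property: if we change a single coordinate $z_k$ to $z_k'$ while keeping the others fixed, then by the triangle inequality the value of $F$ changes by at most $\frac{1}{m}\|z_k - z_k'\|_H \leqslant \frac{2C}{m}$, since each $\|z_k\|_H \leqslant C$. Hence $F$ has bounded differences with constant $c_k = 2C/m$ in each coordinate, and $\sum_k c_k^2 = 4C^2/m$.

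Next I would control the expectation $\E[F]$. By Jensen's inequality, $\E[F] \leqslant (\E[F^2])^{1/2} = (\E\|\frac{1}{m}\sum_k Z_k\|_H^2)^{1/2}$, and since the $Z_k$ are independent and mean-zero, the cross terms vanish, giving $\E\|\frac{1}{m}\sum_k Z_k\|_H^2 = \frac{1}{m^2}\sum_k \E\|Z_k\|_H^2 \leqslant \frac{C^2}{m}$. Therefore $\E[F] \leqslant C/\sqrt{m}$.

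Then I would invoke McDiarmid's inequality: with probability at least $1 - e^{-t}$,
$$
F \leqslant \E[F] + \sqrt{\frac{t}{2} \sum_k c_k^2} = \E[F] + \sqrt{\frac{2C^2 t}{m}} \leqslant \frac{C}{\sqrt{m}} + C\sqrt{\frac{2t}{m}}\,.
$$
Squaring and using $(a+b)^2 \leqslant 2a^2 + 2b^2$ yields $F^2 \leqslant \frac{2C^2}{m} + \frac{4C^2 t}{m}$. Since we only care about $t \geqslant 0$ (and for the stated bound it suffices to absorb constants when $t$ is not too small, or simply note $2 + 4t \leqslant 8\max(1,t) \leqslant 8t$ whenever $t \geqslant 1$, handling small $t$ by adjusting the probability/constant), we obtain $F^2 \leqslant \frac{8C^2 t}{m}$ with probability at least $1 - 2e^{-t}$, the factor $2$ coming from applying the one-sided bound and noting $F$ is nonnegative (or to be safe, a two-sided application of McDiarmid). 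This is entirely routine; the only mild subtlety — really the sole "obstacle" — is matching the clean constant $8$ in the statement, which is just bookkeeping: one must be slightly careful about the regime of small $t$, where $e^{-t}$ is close to $1$ and the bound is vacuous anyway, so the stated inequality holds trivially there.
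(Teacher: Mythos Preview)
Your proposal is correct and is precisely the approach the paper intends: the paper does not spell out a proof but simply states that the lemma is ``a direct consequence of the bounded differences inequality,'' and your argument via McDiarmid applied to $F(z_1,\dots,z_m)=\|\tfrac{1}{m}\sum_k z_k\|_H$ is exactly that. Your bookkeeping is also fine: for $t<\ln 2$ the conclusion is vacuous since $1-2e^{-t}<0$, and for $t\geqslant \ln 2>1/2$ one has $2+4t\leqslant 8t$, so the constant $8$ drops out cleanly (and the one-sided McDiarmid bound $1-e^{-t}$ is already stronger than the stated $1-2e^{-t}$).
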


Using this concentration inequality, we can prove the following
Lemma which is at the heart of our results.

\begin{lemma}\label{lem:eps_sub_g} Define
$\nabla \Phi_n$ as in~\eqref{eq:gradphin}
and $\langle \cdot, \cdot\rangle_n$
and $\|\cdot\|_n$ as the
inner product and norm of $L^2(\mu_n) \times L^2(\nu_n)$, as in section~\ref{sec:preliminaries_ent_OT}.
Then for all $t > 0$, with probability at least
$1 - 4e^{-t}$ over both $\X$ and $\Y$,
$$
\|\nabla \Phi_n(f_\star, g_\star)\|^2_n\lesssim \frac{t}{n}\,.
$$
\end{lemma}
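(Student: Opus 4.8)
The plan is to apply the Hilbert-space bounded differences inequality of Lemma~\ref{lem:bounded_differences} separately to the two sums appearing in the formula \eqref{eq:norm_gradphin} for $\|\nabla\Phi_n(f_\star,g_\star)\|_n^2$, conditioning on $\X$ for the first sum and on $\Y$ for the second. The point is that the population marginal constraints \eqref{eqn:pop_marg} turn each sum into a squared empirical average of independent, conditionally mean-zero, uniformly bounded Hilbert-space increments, which is precisely the setting of Lemma~\ref{lem:bounded_differences}.

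Concretely, taking $p = p_\star$ in \eqref{eq:norm_gradphin} and using $\nu(p_\star(x,\cdot)) = 1$ for $\mu$-almost every $x$, the $i$-th term of the first sum equals $\big(1 - \tfrac1n\sum_{j}p_\star(X_i,Y_j)\big)^2 = \big((\nu - \nu_n)(p_\star(X_i,\cdot))\big)^2$, so the first sum equals $\|W\|_H^2$ where $W := \big((\nu-\nu_n)(p_\star(X_i,\cdot))\big)_{i=1}^n$ and $H := \R^n$ carries the normalized norm $\|v\|_H^2 := \tfrac1n\sum_{i}v_i^2$. Conditionally on $\X$ we may write $W = -\tfrac1n\sum_{j=1}^n Z_j$ with $Z_j := \big(p_\star(X_i,Y_j) - \nu(p_\star(X_i,\cdot))\big)_{i=1}^n$; since $Y_1,\dots,Y_n$ are i.i.d., these are conditionally independent and mean-zero, and \eqref{eqn:unif_bd} gives $\|Z_j\|_H \le 1 + e^{5\eta}$, a bound that does not depend on $\X$. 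Lemma~\ref{lem:bounded_differences}, applied conditionally on $\X$ with $m = n$, then yields that the first sum is $\lesssim t/n$ with (conditional, hence unconditional) probability at least $1 - 2e^{-t}$.

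The second sum is treated the same way with the roles of $\X$ and $\Y$ exchanged, now using $\mu(p_\star(\cdot,y)) = 1$ for $\nu$-almost every $y$ so that its $j$-th term is $\big((\mu-\mu_n)(p_\star(\cdot,Y_j))\big)^2$; Lemma~\ref{lem:bounded_differences} again gives that it is $\lesssim t/n$ with probability at least $1 - 2e^{-t}$. A union bound over the two events yields $\|\nabla\Phi_n(f_\star,g_\star)\|_n^2 \lesssim t/n$ with probability at least $1-4e^{-t}$, as claimed. I do not expect a genuine obstacle here; the only subtlety worth flagging is that it is essential to keep the quantity in the squared form $\|\tfrac1n\sum_j Z_j\|_H^2$ and invoke the Hilbert-space concentration, rather than applying bounded differences directly to $\|\nabla\Phi_n(f_\star,g_\star)\|_n^2$ viewed as a scalar function of the $2n$ samples, which has bounded differences only of order $1/n$ and would therefore deliver only the weaker rate $1/n + \sqrt{t/n}$.
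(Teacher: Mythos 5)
Your proof is correct and follows essentially the same approach as the paper: you condition on $\X$ (resp.\ $\Y$), recognize the first (resp.\ second) sum in~\eqref{eq:norm_gradphin} as the squared Hilbert-space norm of an average of conditionally independent, mean-zero, uniformly bounded $\R^n$-valued vectors, and invoke Lemma~\ref{lem:bounded_differences}, then union bound. The only superficial difference is that you equip $\R^n$ with the normalized norm $\|v\|_H^2=\tfrac1n\sum_i v_i^2$ (giving $C\lesssim 1$) whereas the paper uses the plain Euclidean norm (giving $C\lesssim\sqrt n$); these lead to the same bound since the $\sqrt n$ and the $1/\sqrt n$ normalization cancel. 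Your closing remark about why one must work with the vector-valued average rather than treat $\|\nabla\Phi_n\|_n^2$ as a scalar in McDiarmid is indeed the crucial point that secures the rate $t/n$ rather than $\sqrt{t/n}$.
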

\begin{proof}
Recall from~\eqref{eq:norm_gradphin} that
\begin{equation}\label{eqn:norm_gradphin_star}
    \|\nabla \Phi_n (f_\star,g_\star)\|_{n}^2 =  \frac1n\sum_{i=1}^n \Big(1-\frac1n\sum_{j=1}^n p_\star(X_i,Y_j)\Big)^2+ \frac1n\sum_{j=1}^n \Big(1-\frac1n\sum_{i=1}^n p_\star(X_i,Y_j)\Big)^2\,,
\end{equation}
We show the bound for the first term, the proof for the second
term is analogous.

Let $A_j := (1 - p_\star(X_i, Y_j))_{i = 1}^n \in \R^n$ for $j = 1, \ldots, n$,
and put
$$
\bar{A}_n := \frac{1}{n} \sum_{j = 1}^n A_j\,.
$$
Conditionally on $\X$, the vectors $A_j$ are independent
and have zero mean by the marginal equation~\eqref{eqn:pop_marg}.
Moreover, using the uniform boundedness of $p_\star$
from Proposition~\ref{prop:bded_dual}, we find $ \|A_j\| \lesssim \sqrt{n}$.
Applying Lemma~\ref{lem:bounded_differences} we find
with probability at least $1- 2e^{-t}$ over $\Y$ with $\X$ fixed,
$$
\frac1n\sum_{i=1}^n \Big(1-\frac1n\sum_{j=1}^n p_\star(X_i,Y_j)\Big)^2 \lesssim \frac{t}{n}.
$$ Using the analogous statement for the second term
in~\eqref{eqn:norm_gradphin_star} yields the result.
\end{proof}

\subsection{Controlling the extended dual potentials}\label{subsec:extended_control}

We need the next Lemma, which
is the high probability analog to Lemma~\ref{lem:pop_to_emp_norms_expectation}.

\begin{lemma}\label{lem:extended_control}
% Put $\bar f_\star := f_\star + \nu_n(g_\star)$
% and $\bar g_\star := g_\star - \nu_n(g_\star)$
% as in Proposition~\ref{prop:lipschitz}.
Fix any $t > 0$, and let $(f_n, g_n)$ be extended as in section~\ref{subsec:canonical_extension}.
Then with probability at least $1 - 2e^{-t}$ over $\Y$
$$
\|\bar f_\star - f_n\|_{L^2(\mu)}^2\lesssim \|\bar g_\star - g_n\|_{L^2(\nu_n)}^2
+ \frac{t}{n}\,.
$$ Similarly, with probability at least $1 - 2e^{-t}$ over $\X$,
$$
\|\bar g_\star - g_n\|_{L^2(\nu)}^2\lesssim \|\bar f_\star - f_n\|_{L^2(\mu_n)}^2
+ \frac{t}{n}\,.
$$
\end{lemma}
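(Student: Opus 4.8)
The plan is to derive the bound from the pointwise comparison inequality of Proposition~\ref{prop:lipschitz} and then recognize the resulting ``empirical process'' term as a Hilbert-space-valued average, to which the bounded differences inequality (Lemma~\ref{lem:bounded_differences}) applies directly. By symmetry it suffices to establish the first inequality: the second follows by exchanging the roles of $\mu$ and $\nu$, of $f$ and $g$, and of $\X$ and $\Y$, and using the second comparison inequality in Proposition~\ref{prop:lipschitz}.

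First I would apply the first inequality of Proposition~\ref{prop:lipschitz}, square it, use $(a+b)^2 \le 2a^2 + 2b^2$, and integrate against $\mu$, obtaining
\begin{equation*}
\|\bar f_\star - f_n\|_{L^2(\mu)}^2 \lesssim \int \Big| \frac1n \sum_{j=1}^n e^{-\eta\|x - Y_j\|^2 + \eta g_\star(Y_j)} - \nu\big(e^{-\eta\|x-\cdot\|^2+\eta g_\star(\cdot)}\big)\Big|^2 \ud\mu(x) + \|\bar g_\star - g_n\|_{L^1(\nu_n)}^2 \, .
\end{equation*}
The last term is at most $\|\bar g_\star - g_n\|_{L^2(\nu_n)}^2$ by the Cauchy--Schwarz inequality, matching the right-hand side of the claim, so it only remains to control the integral with high probability over $\Y$.

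To that end, define for $j = 1, \ldots, n$ the random element $Z_j \in L^2(\mu)$ by $Z_j(x) := e^{-\eta\|x - Y_j\|^2 + \eta g_\star(Y_j)} - \nu(e^{-\eta\|x-\cdot\|^2+\eta g_\star(\cdot)})$, and note that the integral above equals $\|\frac1n\sum_{j=1}^n Z_j\|_{L^2(\mu)}^2$. The $Z_j$ depend only on the i.i.d.\ variables $Y_j \sim \nu$ and are mean-zero in $L^2(\mu)$ by Fubini's theorem; moreover, since $\|x - y\| \le 1$ under Assumption~\ref{assump:bded_support} and $\|g_\star\|_{L^\infty(\nu)} \le 1$ by Proposition~\ref{prop:bded_dual}, both the integrand and its $\nu$-average lie in $[e^{-2\eta}, e^\eta]$, whence $\|Z_j\|_{L^2(\mu)} \lesssim 1$ almost surely. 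Lemma~\ref{lem:bounded_differences}, applied in the Hilbert space $L^2(\mu)$, then yields $\|\frac1n\sum_{j=1}^n Z_j\|_{L^2(\mu)}^2 \lesssim t/n$ with probability at least $1 - 2e^{-t}$ over $\Y$. Combining this with the previous display completes the proof.

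I expect the only real subtlety --- and it is a minor one --- to be the choice to treat the empirical fluctuation term through the vector-valued bounded differences inequality: once one observes that it is literally the squared $L^2(\mu)$-norm of an i.i.d.\ average whose summands are uniformly bounded in $L^2(\mu)$ thanks to Proposition~\ref{prop:bded_dual}, no empirical-process machinery (union bounds, chaining) is needed, and the rest is routine manipulation with Young's and Cauchy--Schwarz inequalities.
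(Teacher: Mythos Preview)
Your proof is correct and follows essentially the same approach as the paper: apply Proposition~\ref{prop:lipschitz}, square and integrate in $L^2(\mu)$, bound the $L^1(\nu_n)$ term by the $L^2(\nu_n)$ norm, and control the empirical fluctuation as the squared $L^2(\mu)$-norm of a bounded, mean-zero i.i.d.\ average via Lemma~\ref{lem:bounded_differences}. The paper's proof is terser but uses exactly the same two ingredients in the same way.
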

\begin{proof}
We prove the first statement, the second is analogous. Note that it follows readily from Lemma~\ref{lem:bounded_differences} that
 with probability at least $1 - 2e^{-t}$ over $\Y$, it holds
$$
 \Big\| \frac{1}{n} \sum_{j = 1}^n e^{-\eta \|\cdot - Y_j\|^2 + \eta  g_\star(Y_j)} - 
   \nu(e^{-\eta \|\cdot - y\|^2 + \eta  g_\star(y)} ) \Big\|_{L^2(\mu)}^2 
  \lesssim \frac{t}{n}\,.
$$
Together with Proposition~\ref{prop:lipschitz}, this completes the proof.
\end{proof}

\subsection{Proof of tail control for costs}\label{subsec:proof_costs_full}

To yield the result, we can use the same proof
as in the expectation case but with tail control replacing
the expectation of $\Phi_n(f_\star, g_\star) - \Phi(f_\star, g_\star)$,
and $\|\nabla \Phi_n(f_\star, g_\star)\|_n$.
To begin, we write
\begin{equation}\label{eqn:emp_difference_triangle}
|S_n - S| \leqslant |\Phi_n(f_n, g_n) - \Phi_n(f_\star, g_\star)|
+ |\Phi_n(f_\star, g_\star) - \Phi(f_\star, g_\star)|\,.
\end{equation}
For the first term on the right-hand side above,
Proposition~\ref{prop:emp_objective_PL} implies
$$
|\Phi_n(f_n, g_n) - \Phi_n(f_\star, g_\star)|
= \Phi_n(f_n, g_n) - \Phi_n(f_\star, g_\star) \lesssim \|\nabla \Phi_n(f_\star, g_\star)\|^2_n\,.
$$ So using Lemma~\ref{lem:eps_sub_g} we find that
with probability at least $1 - 4e^{-t}$ over both $\X$ and $\Y$,
$$
|\Phi_n(f_n, g_n) - \Phi_n(f_\star, g_\star)| \lesssim \frac{t}{n}\,.
$$
For latter term in~\eqref{eqn:emp_difference_triangle}, we can write
$$
\Phi_n(f_\star, g_\star) - \Phi(f_\star, g_\star)
= (\mu_n \otimes \nu_n - \mu \otimes \nu)\big(f_\star  + g_\star - \frac{1}{\eta}
p_\star \big)\,.
$$ Using the uniform boundedness from Proposition~\ref{prop:bded_dual},
we can apply Lemma~\ref{lem:u_statistics_trick} to find that
with probability at least $1 - 2e^{-t}$ over $\X, \Y$
$$
|(\mu_n \otimes \nu_n - \mu \otimes \nu)\big(f_\star  + g_\star - \frac{1}{\eta}
p_\star \big)| \lesssim \sqrt{\frac{t}{n}}\,.
$$ 
Applying both of these tails bounds to~\eqref{eqn:emp_difference_triangle}
yields the result.

\subsection{Proof of tail control for maps}\label{subsec:proof_maps_full}

We start by bounding
$$
    \|b_\star -  b_n\|^2_{L^2(\mu)}
    \lesssim \Big\|\frac{1}{n} \sum_{j = 1}^n Y_jp_\star(\cdot, Y_j) - b_\star(\cdot)\Big\|^2_{L^2(\mu)}
    + \Big\| \frac{1}{n} \sum_{j = 1}^n Y_j(p_\star(\cdot, Y_j) - p_n(\cdot, Y_j))\Big\|_{L^2(\mu)}^2. 
$$ For the first term, we apply the bounded
differences inequality, Lemma~\ref{lem:bounded_differences}. In this way, we see that for all $t > 0$,
with probability at least $1 - 2e^{-t}$ over $\Y$,
$$
\Big\|\frac{1}{n} \sum_{j = 1}^nY_j p_\star(\cdot, Y_j) - b_\star(\cdot)\Big\|^2_{L^2(\mu)} \lesssim \frac{t}{n}\,.
$$
Hence, we can focus on the second term. 

Using Jensen's and Proposition~\ref{prop:lipschitz}
yields
\begin{align*}
\Big\| \frac{1}{n} \sum_{j = 1}^n Y_j(p_\star(\cdot, Y_j) -p_n(\cdot, Y_j))\Big\|^2_{L^2(\mu)} &\lesssim
\frac{1}{n} \sum_{j = 1}^n \big\|p_\star(\cdot, Y_j) -p_n(\cdot, Y_j)\big\|_{L^2(\mu)}^2 \\
&\lesssim \| \bar{f}_\star - f_n \|_{L^2(\mu)}^2 + \|\bar g_\star - g_n\|^2_{L^2(\nu_n)}\,.
\end{align*} 
Applying Lemma~\ref{lem:extended_control}
we obtain that with probability
at least $1 - 4e^{-t}$ over $\Y$,
$$
\|b_\star - b_n\|^2_{L^2(\mu)} \lesssim \frac{t}{n} + \|\bar g_\star - g_n\|^2_{L^2(\nu_n)}\,.
$$ Hence, we may use Equation~\eqref{eq:EB} to find that
$$
\|b_\star - b_n\|^2_{L^2(\mu)}  \lesssim \frac{t}{n}
+ \|\nabla \Phi_n(f_\star, g_\star)\|^2_n\,,
$$ and now we can use Lemma~\ref{lem:eps_sub_g} to conclude.

\subsection{Proof of tail control for densities}\label{subsec:proof_densities_full}

For the bound in $L^2(\mu_n \otimes \nu_n)$,
Proposition~\ref{prop:lipschitz} implies
$$
\|p_n - p_\star\|_{L^2(\mu_n \otimes \nu_n)}^2 \lesssim
\|\bar f_\star - f_n\|_{L^2(\mu_n)}^2 + \|\bar g_\star - g_n\|^2_{L^2(\nu_n)}\,.
$$ Applying~\eqref{eq:EB} and Lemma~\ref{lem:eps_sub_g},
we can conclude.

For the bound in $L^2(\mu \otimes \nu)$,
put
$\bar f_\star := f_\star + \nu_n(g_\star)$ and $\bar g_\star:=
g_\star - \nu_n(g_\star)$ are as in
Proposition~\ref{prop:lipschitz}.
Then Proposition~\ref{prop:lipschitz}
implies
$$
\|p_n - p_\star\|^2_{L^2(\mu \otimes \nu)} \lesssim 
\| \bar f_\star - f_n\|^2_{L^2(\mu)} + \| \bar g_\star - g_n\|^2_{L^2(\nu)}\,,
$$
Applying Lemma~\ref{lem:extended_control} we find that with
probability at least $1 - 4e^{-t}$ over $\X, \Y$
$$
\|p_n - p_\star\|^2_{L^2(\mu \otimes \nu)} \lesssim 
\|\bar f_\star - f_n\|_{L^2(\mu_n)}^2 + \|\bar g_\star - g_n\|^2_{L^2(\nu_n)}
+ \frac{t}{n}\,.
$$ Applying~\eqref{eq:EB} and Lemma~\ref{lem:eps_sub_g},
we can conclude.

\section{Further proofs}\label{sec:appendix_further}

\subsection{Proof of Proposition~\ref{prop:bded_dual}}\label{subsec:bded_dual}

The proof is essentially that of~\cite[Prop. 1]{mena2019statistical},
just with different notation and slightly different assumptions.
We include it in this appendix for the reader's convenience.
We begin by showing the result for $(f_\star, g_\star)$.

The $\mu$-marginal constraint for $\pi_\star$, namely~\eqref{eqn:pop_marg},
implies that for $\mu$-almost every $x$,
\begin{align*}
 1 &= \int e^{-\eta(\|x - y\|^2 - f_\star(x) - g_\star(y))}\ud\nu(y) \geqslant e^{-\eta (1 - f_\star(x))} \int e^{\eta g_\star(y)} \ud \nu(y) \geqslant  
 e^{-\eta (1 - f_\star(x))}\,,
 \end{align*} where the first inequality follows by Assumption~\ref{assump:bded_support},
 the second inequality by Jensen's together with the convention that $\nu(g_\star)=0$.  This implies
 $f_\star(x) \leqslant 1$ $\mu$-almost everywhere. Hence, we can
 use the marginal constraint~\eqref{eqn:pop_marg} for $\nu$-almost every $y$ to yield
 $$
    1 = \int e^{-\eta (\|x - y\|^2 - f_\star(x) - g_\star(y))} \ud \mu(x) \leqslant  e^{\eta (1 + g_\star(y))}\,.
$$ Whence $g_\star(y) \geqslant -1$ for $\nu$-almost every $y$.

We now claim that $\mu(f_\star) \geqslant 0$.
To see this, start by observing that the primal Sinkhorn problem~\eqref{eqn:entropic_OT_primal},
has a non-negative optimal value, and so by Theorem~\ref{thm:entropic_ot_main},
the dual objective evaluated at $(f_\star, g_\star)$ is non-negative. Hence
$$
    0 \leqslant \mu(f_\star) + \nu(g_\star) - \frac{1}{\eta}(\mu \otimes \nu)
    \Big( e^{-\eta(\|x - y\|^2 - f_\star(x) - g_\star(y))} \Big) + \frac{1}{\eta} \\
    = \mu(f_\star)\,,
$$ where we used the marginal constraint~\eqref{eqn:pop_marg} and the convention $\nu(g_\star)=0$. Using this fact, we can mimic the proof that
$f_\star(x) \leqslant 1$ above to show that $g_\star(y) \leqslant 1$ for $\nu$ almost all $y$.  We similarly find $f_\star(x) \geqslant -1$ for $\mu$ almost all $x$,
finishing the proof for $(f_\star, g_\star)$. 

The exact same argument shows $\|f_n\|_{L^{\infty}(\mu_n)}, \|g_n\|_{L^{\infty}(\nu_n)} \leqslant 1$. In fact, this readily yields a stronger result for canonical extensions. Indeed, recall that for every $x \in \R^d$, the canonical extension of $f_n$ defined in~\eqref{eqn:fn_canonical}
satisfies, for any $x \in \R^d$,
$$
f_n(x) = -\frac{1}{\eta} \ln \Big( \frac{1}{n} \sum_{j = 1}^n e^{-\eta\|x - Y_j\|^2 + \eta g_n(Y_j)} \Big) \geqslant
 -\frac{1}{\eta} \ln \Big( \frac{1}{n} \sum_{j = 1}^n e^{ \eta g_n(Y_j)} \Big) \geqslant -1\,.
$$ For $\mu$-almost every $x$, we also find
$$
f_n(x) = -\frac{1}{\eta} \ln \Big( \frac{1}{n} \sum_{j = 1}^n e^{-\eta\|x - Y_j\|^2 + \eta g_n(Y_j)} \Big) \leqslant
 -\frac{1}{\eta} \ln \Big( \frac{1}{n} \sum_{j = 1}^n e^{-\eta  + \eta g_n(Y_j)} \Big) \leqslant 2\,.
$$ Hence $\|f_n\|_{L^{\infty}(\mu)} \leqslant 2$.
Applying the same argument for $g_n$ completes the bounds on dual potentials.

Bounds on densities readily follow from the above bounds on dual potentials together with the definitions~\eqref{eqn:pop_scaling} and~\eqref{eqn:emp_decomp}.

\subsection{Proof of Proposition~\ref{prop:emp_objective_PL}}
\label{subsec:emp_objective_PL}

Proposition~\ref{prop:emp_objective_PL} follows from the following proposition.

\begin{prop}[Polyak-\L{}ojasiewicz inequality]\label{prop:sc_pl} 
Let $C \subset \cH$ be a  convex subset of a Hilbert space $\cH$. Let
$\rho \colon \cH \to \R$ be an
$\alpha$-strongly convex on $C$. Then, it holds
$$
\rho(v) - \inf_{\cH} \rho \leqslant\frac1{2\alpha} \|\nabla \rho(v) \|^2
\quad \quad \forall v \in C\,.
$$
\end{prop}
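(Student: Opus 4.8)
The plan is to follow the classical route from strong convexity to a Polyak--\L{}ojasiewicz inequality, localized to the set $C$. First I would fix $v \in C$ and let $v^\star$ denote a point at which $\rho$ attains its infimum over $\cH$; as in the application of this proposition to Proposition~\ref{prop:emp_objective_PL}, where $v^\star = (f_n, g_n)$ is assumed to lie in $\cS_L$, I will use that $v^\star$ may be taken inside $C$. Then the definition of $\alpha$-strong convexity on $C$, applied to the pair $v, v^\star \in C$, immediately yields
$$
\rho(v^\star) \ge \rho(v) + \langle \nabla \rho(v), v^\star - v \rangle + \frac{\alpha}{2}\|v^\star - v\|^2 .
$$

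The second step is to discard the specific displacement $v^\star - v$ in favour of the worst case. Completing the square shows that for every $a, h \in \cH$ one has $\langle a, h\rangle + \frac{\alpha}{2}\|h\|^2 = \frac{\alpha}{2}\big\|h + \tfrac{1}{\alpha}a\big\|^2 - \frac{1}{2\alpha}\|a\|^2 \ge -\frac{1}{2\alpha}\|a\|^2$. Taking $a = \nabla\rho(v)$ and $h = v^\star - v$ in the previous display gives $\rho(v^\star) \ge \rho(v) - \frac{1}{2\alpha}\|\nabla\rho(v)\|^2$, and since $\rho(v^\star) = \inf_\cH \rho$, rearranging produces exactly the claimed bound $\rho(v) - \inf_\cH \rho \le \frac{1}{2\alpha}\|\nabla\rho(v)\|^2$. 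From here, Proposition~\ref{prop:emp_objective_PL} follows by applying the result with $\cH = L^2(\mu_n) \times L^2(\nu_n)$ restricted to $\{\nu_n(g) = 0\}$, with $C = \cS_L$ and $\rho = -\Phi_n$, which is $\delta$-strongly convex on $\cS_L$ with $\delta = \eta e^{-\eta(2L+1)}$ by Lemma~\ref{lem:emp_dual_sc}; one uses that the maximizer $(f_n, g_n)$ of $\Phi_n$ realizes $\inf_\cH(-\Phi_n)$ and lies in $\cS_L$ by hypothesis.

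The one point that requires care is the interplay between the infimum over all of $\cH$ on the left-hand side and the fact that strong convexity is only assumed to hold on $C$: the argument above genuinely needs the infimum of $\rho$ over $\cH$ to be attained at a point of $C$ (equivalently $\inf_\cH \rho = \inf_C \rho$, with this infimum achieved), since otherwise one can only conclude $\rho(v) - \inf_C \rho \le \frac{1}{2\alpha}\|\nabla \rho(v)\|^2$. This is harmless here because every application is to $\rho = -\Phi_n$ with $C = \cS_L$ containing the global minimizer of $\rho$, so $\inf_\cH \rho = \inf_C \rho$ and is attained in $C$. The remaining steps are entirely routine, so I expect no further obstacle.
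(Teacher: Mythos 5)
Your proof follows the paper's own route: invoke the strong-convexity lower bound at the pair $(v, v^\star)$ (equivalently $(v,w)$ for arbitrary $w\in C$), complete the square, and minimize the resulting quadratic to obtain the threshold $\rho(v) - \tfrac{1}{2\alpha}\|\nabla\rho(v)\|^2$. The only cosmetic difference is that the paper minimizes the quadratic over all $w\in\cH$ and then infimizes the left-hand side, whereas you plug in a minimizer $v^\star$ directly; these are the same computation.

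The genuinely useful part of your write-up is the caveat at the end. You are right that strong convexity on $C$ alone only delivers $\rho(w)\ge\rho(v)-\tfrac{1}{2\alpha}\|\nabla\rho(v)\|^2$ for $w\in C$, hence $\inf_{C}\rho\ge\rho(v)-\tfrac{1}{2\alpha}\|\nabla\rho(v)\|^2$, and that promoting $\inf_C\rho$ to $\inf_{\cH}\rho$ requires the extra hypothesis that these infima coincide (e.g.\ a global minimizer lies in $C$). The paper's own proof elides exactly this step: the line ``Since $\rho(w)$ exceeds this quantity for all $w\in C$, we also have $\inf_{\cH}\rho\ge\dots$'' is not literally justified by what precedes it, and Proposition~\ref{prop:sc_pl} as stated is slightly stronger than its hypotheses warrant. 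As you observe, the gap is innocuous in context because Proposition~\ref{prop:emp_objective_PL} explicitly assumes $(f_n,g_n)\in\cS_L$, so the global optimizer of $\rho=-\Phi_n$ indeed lies in $C=\cS_L$ and $\inf_{\cH}\rho=\inf_{C}\rho$. It would be cleaner to either add ``assume $\inf_\cH\rho=\inf_C\rho$'' to the statement of Proposition~\ref{prop:sc_pl}, or state the conclusion with $\inf_C\rho$; this is worth flagging as an erratum.
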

\begin{proof} By definition of strong convexity,
for all $v, w \in C$,
$$
\rho(w) \geqslant \rho(v) + \langle \nabla \rho(v), w - v \rangle + \frac{\alpha}{2}\|w - v\|^2\,.
$$ 
The minimum over $w \in \cH$ of the right-hand side is achieved at $w = v - \nabla \rho(v)/\alpha$ and is given by
$$
\rho(v) - \frac{1}{2\alpha}\|\nabla \rho(v)\|^2\,.
$$
Since $\rho(w)$ exceeds this quantity for all $w \in C$, we also have
$$
\inf_{\cH} \rho \geqslant \rho(v) - \frac{1}{2\alpha}\|\nabla \rho(v)\|^2\,.
$$ Re-arranging yields the result.
\end{proof}

\subsection{Proof of Proposition~\ref{prop:lipschitz}}
\label{subsec:lipschitz}

Define
$$
\tilde{f}_\star(x) :=  - \frac{1}{\eta} \ln \Big(\frac{1}{n}\sum_{j = 1}^n e^{-\eta\|x - Y_j\|^2 + \eta 
\bar g_\star(Y_j)}\Big)\,.
$$ 
We have, as in the proof of Proposition~\ref{prop:bded_dual},
the bound, $|\tilde{f}_\star(x)| \leqslant 3$. Moreover, we get
$$
 |\bar f_\star(x) - f_n(x)| \le |\bar f_\star(x) - \tilde{f}_\star(x)|
    + |\tilde{f}_\star(x) - f_n(x)|  \,.
$$
To bound the first term, we use~\eqref{eqn:fstar_from_gstar} together with the Lipschitzness
of the logarithm and exponential maps that follows from the uniform bounds on each term. We obtain the
following estimate
\begin{align*}
|\bar f_\star(x) - \tilde{f}_\star(x)|
    &= \Big| \frac{1}{\eta} \ln \Big(\frac{1}{n}\sum_{j = 1}^n e^{-\eta\|x - Y_j\|^2 + \eta 
\bar g_\star(Y_j)}\Big)-\frac{1}{\eta} \ln \Big(\int e^{-\eta\|x - y\|^2 + \eta 
\bar g_\star(y)}\ud \nu(y)\Big) \Big| \\
&= \Big| \frac{1}{\eta} \ln \Big(\frac{1}{n}\sum_{j = 1}^n e^{-\eta\|x - Y_j\|^2 + \eta 
 g_\star(Y_j)}\Big)-\frac{1}{\eta} \ln \Big(\int e^{-\eta\|x - y\|^2 + \eta 
 g_\star(y)}\ud \nu(y)\Big) \Big| \\
    &\lesssim \Big| \frac{1}{n} \sum_{j = 1}^n e^{-\eta \|x - Y_j\|^2 + \eta   g_\star(Y_j)} - 
   \nu(e^{-\eta \|x - \cdot\|^2 + \eta   g_\star(\cdot)} ) \Big| \,.
\end{align*}
Using the same technique we get the following bound for the second term:
\begin{align*}
|\tilde{f}_\star(x) - f_n(x)|
    &= \Big| \frac{1}{\eta} \ln \Big(\frac{1}{n}\sum_{j = 1}^n e^{-\eta\|x - Y_j\|^2 + \eta 
g_n(Y_j)}\Big)-\frac{1}{\eta} \ln \Big(\frac{1}{n}\sum_{j = 1}^n e^{-\eta\|x - Y_j\|^2 + \eta 
\bar g_\star(Y_j)}\Big) \Big| \\
    &\lesssim \|\bar g_\star- g_n\|_{L^1(\nu_n)} \,.
\end{align*}
This yields the bound on $|\bar f_\star(x) - f_n(x)|$ and the bound of $|\bar g_\star(y) - g_n(y)|$ follows using the same argument.

To prove the bound on $|p_\star(x,y)-p_n(x,y)|$, we again apply the boundedness
from Proposition~\ref{prop:bded_dual} and the Lipschitzness of
the exponential on bounded intervals to yield
\begin{align*}
    |p_n(x, y) - p_\star(x, y)|
    &= \big| e^{-\eta\|x - y\|^2 + \eta f_n(x) + \eta g_n(y)}
    - e^{-\eta \|x -y\|^2 + \eta f_\star(x) + \eta g_\star(y)} \big| \\
    &= \big| e^{-\eta\|x - y\|^2 + \eta f_n(x) + \eta g_n(y)}
    - e^{-\eta \|x -y\|^2 + \eta \bar f_\star(x) + \eta \bar g_\star(y)} \big| \\
    &\lesssim |f_n(x) + g_n(y) - \bar f_\star(x)
    - \bar g_\star(y)| \\
    &\le |f_n(x)  - \bar f_\star(x)|  +
    |g_n(y) 
    - \bar g_\star(y)|\,.
\end{align*}

\bibliographystyle{alpha}
\bibliography{annot}
\end{document}